\documentclass[a4paper,11pt]{article}

\usepackage{geometry}
\usepackage{amsmath,amssymb,amsthm,mathtools}
\usepackage{mathrsfs}
\usepackage{tikz}
\usetikzlibrary{arrows.meta}
\usepackage{subcaption}
\usepackage[colorlinks=true,linkcolor=blue,citecolor=blue,urlcolor=blue]{hyperref}
\usepackage{float}
\numberwithin{equation}{section}

\theoremstyle{plain}
\newtheorem{theorem}{Theorem}[section]
\newtheorem{lemma}[theorem]{Lemma}
\newtheorem{proposition}[theorem]{Proposition}

\theoremstyle{remark}

\theoremstyle{definition}
\newtheorem{definition}[theorem]{Definition}

\begin{document}
	
	\title{Local well-posedness for nonlinear Dirac equation\\ on $N$-star metric graphs}
	
	\author{Huichao Xing$^{1,2}$\quad Zhipeng Yang$^{2,3}$\thanks{Corresponding author: yangzhipeng326@163.com.}\\
		{\small $^{1}$ Faculty of Education, Yunnan Normal University, Kunming, China}\\
		{\small $^{2}$ Yunnan Key Laboratory of Modern Analytical Mathematics and Applications, Kunming, China.}\\
		{\small $^{3}$ Department of Mathematics, Yunnan Normal University, Kunming, China}}
	
	\date{}
	\maketitle
	
	\begin{abstract}
		We consider the Cauchy problem for the nonlinear Dirac equation on a noncompact $N$--star metric graph $G$,
		\[
		\mathrm{i}\partial_t \psi = D\psi - |\psi|^{p-2}\psi,
		\qquad
		\psi(0)=\psi_0,
		\]
		where $p\ge3$, $\psi:\mathbb{R}\times G\to\mathbb{C}^2$ and $D$ denotes the self-adjoint Dirac--Kirchhoff operator on $G$.
		Using Bourgain-type spaces defined through the spectral resolution of $D$, together with elementary $L^\infty$ bounds for the Dirac flow and fractional Nemytskii estimates below the trace threshold, we prove local well-posedness for initial data
		\[
		\psi_0\in H_D^s(G)\cap L^\infty(G;\mathbb C^2),
		\qquad 0\le s<\frac12 .
		\]
		The corresponding solution belongs to
		\[
		C([0,T];H_D^s(G))\cap X_T^{s,b}\cap L^\infty([0,T]\times G).
		\]
		Moreover, $\|\psi(t)\|_{L^2(G;\mathbb{C}^2)}$ is conserved along the solution on the existence interval.
		We also establish a blow-up alternative in the combined $H_D^s$ and space-time $L^\infty$ control norm.
	\end{abstract}

	\paragraph*{Keywords.}
	Nonlinear Dirac equation, Star graph, Cauchy problem.
	
	\paragraph*{2020 Mathematics Subject Classification.}
	35Q41, 35A01, 81Q35.

	\section{Introduction and Main Results}\label{Sec1}
	
	The Dirac equation is a basic model in relativistic quantum mechanics. It describes spin-$\frac12$ particles through a first-order dispersive dynamics; see \cite{Thaller1992}. In $\mathbb{R}^d$ the free dynamics is typically written as
	\[
	i\partial_t\psi = D_m\psi,
	\qquad
	D_m = -i\alpha\cdot\nabla + m\beta,
	\]
	where $\alpha=(\alpha_1,\dots,\alpha_d)$ and $\beta$ are Dirac matrices.
	Nonlinear Dirac equations arise when self-interactions are introduced at the level of the spinor field, leading to models of the form
	\[
	i\partial_t\psi = D_m\psi - \mathcal{N}(\psi).
	\]
	Classical examples include the Soler model \cite{Soler1970},
	\[
	\mathcal{N}(\psi)=g(\bar\psi\psi)\,\beta\psi,
	\]
	and the Gross--Neveu model \cite{GrossNeveu1974},
	\[
	\mathcal{N}(\psi)=g(\bar\psi\psi)\,\psi.
	\]
	In this paper we focus on a Kerr-type power nonlinearity,
	\[
	\mathcal{N}(\psi)=|\psi|^{p-2}\psi,
	\qquad p\ge3,
	\]
	which is gauge invariant and is a standard model nonlinearity in dispersive equations.
	
	Metric graphs, also called quantum graphs, provide an effective framework for wave propagation on networks, where the dynamics is one-dimensional along edges and the interaction at junctions is encoded by vertex conditions. A basic linear model is generated by the graph Laplacian $-\Delta_G$ with Kirchhoff conditions, which yields the Schr\"odinger flow
	\[
	i\partial_t u=-\Delta_G u,\qquad u(0)=u_0,
	\]
	and the wave equation
	\[
	\partial_t^2 u-\Delta_G u=0.
	\]
	Nonlinear dispersive models on graphs, most notably the nonlinear Schr\"odinger equation
	\[
	i\partial_t u=-\Delta_G u-\lvert u\rvert^{q-2}u,
	\]
	have been studied on both compact and noncompact graphs, where topology and vertex coupling affect the nonlinear dynamics. We refer to \cite{BerkolaikoKuchment2013,Kuchment2008} for background on quantum graphs and to \cite{AdamiCacciapuotiFincoNoja2012,Noja2014} for nonlinear Schr\"odinger dynamics on star graphs and related geometries.
	
	Dirac operators on metric graphs have been investigated from the viewpoint of self-adjoint realizations, together with spectral and scattering theory, see for example \cite{BolteHarrison2003,BullaTrenkler1990}. In the nonlinear setting, Borrelli, Carlone, and Tentarelli \cite{Borrelli2021} introduced a self-adjoint Dirac operator on a noncompact metric graph $G$ with Kirchhoff-type vertex conditions and studied the nonlinear Dirac dynamics
	\[
	i\partial_t\psi = D_G\psi-|\psi|^{p-2}\psi,\qquad p>2,
	\]
	together with the localized variant
	\[
	i\partial_t\psi = D_G\psi-\chi_K|\psi|^{p-2}\psi,\qquad p>2,
	\]
	where $K$ denotes the compact core of $G$ and $\chi_K$ is its characteristic function. For initial data $\psi_0\in \mathrm{dom}(D_G)$ they proved a maximal well-posedness theory, providing a unique solution
	\[
	\psi\in C\!\bigl(I,\mathrm{dom}(D_G)\bigr)\cap C^1\!\bigl(I,L^2(G;\mathbb{C}^2)\bigr)
	\]
	on a maximal interval $I$, together with conservation of charge and energy and a blow-up alternative in the graph norm, see \cite[Theorem~1.1]{Borrelli2021}.
	
	Here and below, once the graph has been fixed, we write $D$ for this Dirac--Kirchhoff operator; this is the same operator denoted by $D_G$ in \cite{Borrelli2021}. Related recent variational results for nonlinear Dirac equations on metric or quantum graphs include \cite{HeJi2025Normalized,HeJi2025Limit,GuLiRuzhanskyYang2025}.
	
	The present work concerns the Cauchy problem for nonlinear Dirac equations on metric graphs at Sobolev regularities below the trace threshold. On the line, \(X^{s,b}\)-type methods adapted to the characteristic structure of one-dimensional Dirac operators were developed by Machihara \cite{Machihara2005} and Machihara--Nakanishi--Tsugawa \cite{MachiharaNakanishiTsugawa2010}. On an \(N\)-star graph, the vertex condition couples the half-line components, and the standard space--time Fourier analysis on \(\mathbb R\times\mathbb R\) is not directly available. We work instead with spectral Bourgain spaces associated with the self-adjoint Dirac--Kirchhoff operator \(D\). For the Kerr-type nonlinearity considered here, the nonlinear estimates are derived from fractional Nemytskii estimates in \(H^s\cap L^\infty\) and from the equivalence between \(H_D^s(G)\) and the edgewise Sobolev space for \(0\le s<1/2\).
	
	Low-regularity dispersive equations on star graphs have also been studied by Fourier-restriction or boundary-forcing methods for other models; see, for instance, Cavalcante's work on the Korteweg--de Vries equation on a metric star graph \cite{Cavalcante2018}, the quarter-plane and star-graph analysis of Capistrano-Filho--Cavalcante--Gallego for biharmonic Schr\"odinger equations \cite{CapistranoFilhoCavalcanteGallego2020}, and their fourth-order Schr\"odinger work on star graphs \cite{CapistranoFilhoCavalcanteGallego2022}. Here the Bourgain-type spaces are defined through the spectral resolution of the self-adjoint Dirac--Kirchhoff operator $D$, rather than through boundary forcing operators.
	
	The proof uses three ingredients. First, the Dirac--Kirchhoff operator is diagonalized in the finite-dimensional edge index into half-line Dirac channels, and the corresponding boundary-adapted reflections reduce the linear estimates to the line. Second, for $0\le s<1/2$ the operator Sobolev space $H_D^s(G)$ is equivalent to the edgewise Sobolev space, so the vertex trace is not part of the function-space structure at this regularity. Third, the boundedness of the initial data permits a contraction argument in $X_T^{s,b}\cap L^\infty([0,T]\times G)$ through a Nemytskii estimate for $z\mapsto |z|^{p-2}z$. The mixed-sign quadratic model studied in \cite{XingYang2026MixedSign} is based on a different bilinear structure; the present paper treats the Kerr-type power nonlinearity by composition estimates in $H^s\cap L^\infty$.
	
	\begin{figure}[H]
		\centering
		\begin{tikzpicture}[scale=1.1, line cap=round, line join=round, >=Stealth]
			\fill (0,0) circle (2pt);
			\node[below left] at (0,0) {$v$};
			\draw[thick,->] (0,0) -- (4,0);
			\node[above] at (2.2,0) {$e_1$};
		\end{tikzpicture}
		\caption{The $1$-star metric graph $G$}
		\label{Fig1.1}
	\end{figure}
	
	\begin{figure}[H]
		\centering
		\begin{tikzpicture}[scale=1.1, line cap=round, line join=round, >=Stealth]
			\fill (0,0) circle (2pt);
			\node[below] at (0,0) {$v$};
			\draw[thick,->] (0,0) -- (4,0);
			\draw[thick,->] (0,0) -- (-4,0);
			\node[above] at (2.2,0) {$e_1$};
			\node[above] at (-2.2,0) {$e_2$};
		\end{tikzpicture}
		\caption{The $2$-star metric graph $G$}
		\label{Fig1.2}
	\end{figure}
	
	\begin{figure}[H]
		\centering
		\begin{tikzpicture}[scale=1.1, line cap=round, line join=round, >=Stealth]
			\fill (0,0) circle (2pt);
			\node[below left] at (0,0) {$v$};
			
			\foreach \ang/\lab in {90/{$e_1$},18/{$e_2$},-54/{$e_3$},-126/{$e_4$},162/{$e_5$}} {
				\draw[thick,->] (0,0) -- (\ang:4);
				\node at (\ang:2.2) {\lab};
			}
		\end{tikzpicture}
		\caption{The $5$-star metric graph $G$}
		\label{Fig1.3}
	\end{figure}
	
	Let $G$ be the noncompact $N$-star metric graph obtained by gluing $N$ copies of $\mathbb{R}_+$ at a common vertex; see Figures~\ref{Fig1.1}--\ref{Fig1.3}.
	Let $m\ge 0$ and let $D$ be the Dirac--Kirchhoff operator acting edgewise by
	\[
	(D\psi)_e=\left(-i\sigma_1\partial_x+m\sigma_3\right)\psi_e
	\quad\text{on }\mathbb{R}_+^{(e)},
	\]
	with vertex condition \eqref{eq2.1} and domain \eqref{eq2.2}. For $s\in\mathbb{R}$ we work in the operator Sobolev space $H_D^s(G)$ endowed with its spectral norm. We study the Cauchy problem
	\begin{equation}\label{eq1.1}
		\begin{cases}
			i\partial_t\psi = D\psi - |\psi|^{p-2}\psi, & (t,x)\in\mathbb{R}\times G,\\
			\psi(0,x)=\psi_0(x), & x\in G,
		\end{cases}
	\end{equation}
	where $p\ge3$.

	\begin{theorem}\label{Thm1.1}
		Let $p\ge3$ and let $0\le s<\frac12$. Let
		\[
		\psi_0\in H_D^s(G)\cap L^\infty(G;\mathbb C^2).
		\]
		There exist $T=T(\|\psi_0\|_{H_D^s(G)}+\|\psi_0\|_{L^\infty(G)};G)>0$ and $b\in\left(\frac12,1\right)$ such that \eqref{eq1.1} admits a unique solution
		\[
		\psi\in C([0,T];H_D^s(G))\cap X_T^{s,b}\cap L^\infty([0,T]\times G).
		\]
		The data-to-solution map is locally Lipschitz from $H_D^s(G)\cap L^\infty(G)$ to
		\[
		C([0,T];H_D^s(G))\cap X_T^{s,b}\cap L^\infty([0,T]\times G).
		\]
		Moreover,
		\[
		\|\psi(t)\|_{L^2(G;\mathbb{C}^2)}=\|\psi_0\|_{L^2(G;\mathbb{C}^2)}
		\quad\text{for all }t\in[0,T].
		\]
		Let $[0,T^\ast)$ be the maximal forward lifespan of $\psi$ in the class
		\[
		C([0,T);H_D^s(G))\cap X_{\mathrm{loc}}^{s,b}([0,T))\cap L^\infty_{\mathrm{loc}}([0,T)\times G).
		\]
		If $T^\ast<\infty$ and
		\[
		\sup_{t<T^\ast}\|\psi(t)\|_{H_D^s(G)}+
		\|\psi\|_{L^\infty([0,T^\ast)\times G)}<\infty,
		\]
		then there exists $\delta>0$ such that $\psi$ extends to a solution on $[0,T^\ast+\delta)$.
	\end{theorem}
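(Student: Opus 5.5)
The approach is a contraction argument for the Duhamel formulation
\[
\Phi(\psi)(t)=\eta(t)e^{-itD}\psi_0+i\,\eta_T(t)\int_0^t e^{-i(t-t')D}\bigl(|\psi|^{p-2}\psi\bigr)(t')\,dt'
\]
in the ball
\[
\mathcal B=\Bigl\{\psi:\ \|\psi\|_{X_T^{s,b}}\le 2C_0\|\psi_0\|_{H_D^s},\ \ \|\psi\|_{L^\infty([0,T]\times G)}\le 2C_0\|\psi_0\|_{L^\infty}+1\Bigr\},
\]
equipped with the metric $\|\cdot\|_{X_T^{0,b}}+\|\cdot\|_{L^\infty}$, with $b$ slightly above $\frac12$. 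The spectral Bourgain spaces $X^{s,b}$ are defined through the spectral resolution of $D$. For them I would invoke the standard linear estimates, which follow by abstract spectral calculus exactly as on $\mathbb R$:
\[
\|\eta e^{-itD}\psi_0\|_{X^{s,b}}\lesssim\|\psi_0\|_{H^s_D},\qquad
\Bigl\|\eta_T\int_0^t e^{-i(t-t')D}F\,dt'\Bigr\|_{X^{s,b}}\lesssim T^{1-b+b'}\|F\|_{X^{s,b'-1}},
\]
with $b'<1-b$ close to $\frac12$, together with the embedding $X^{s,b}\hookrightarrow C([0,T];H^s_D)$. The $L^2$-side nonlinear estimate will simply use $\|F\|_{X^{s,b'-1}}\le\|F\|_{L^2_tH^s_D}$, which holds since $b'-1<0$.

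\textbf{Step 1: $L^\infty$ control.} I would use the diagonalization of $D$ into half-line Dirac channels and the boundary-adapted reflections mentioned in the introduction. These reduce $e^{-itD}$ on each channel to the free one-dimensional Dirac flow on $\mathbb R$. That flow is transport along characteristics $x\pm t$ plus, when $m>0$, a smooth bounded-in-time correction given by Bessel-type kernels. This should give
\[
\|e^{-itD}f\|_{L^\infty}\le C(1+|t|)\|f\|_{L^\infty}
\]
for $|t|\le1$. One point to check is that the channel reflection preserves $L^\infty$; the Kirchhoff condition is unitary on $\mathbb C^N$, so the reflected data stay bounded up to a factor depending on $N$. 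The Duhamel term is then estimated in $L^\infty$ by $T\sup_t\|\psi\|_{L^\infty}^{p-1}$.

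\textbf{Step 2: the nonlinear $H^s$ estimate.} Since $0\le s<\frac12$, $H^s_D(G)$ is equivalent to $\bigoplus_e H^s(\mathbb R_+)$ with no trace condition. Zero extension is bounded on $H^s(\mathbb R_+)$ for $s<\frac12$, so I can apply the fractional Nemytskii estimate on $\mathbb R$ to $z\mapsto |z|^{p-2}z$, which is $C^1$ with locally Lipschitz derivative for $p\ge3$:
\[
\||u|^{p-2}u\|_{H^s}\lesssim\|u\|_{L^\infty}^{p-2}\|u\|_{H^s},
\]
\[
\||u|^{p-2}u-|v|^{p-2}v\|_{L^2}\lesssim(\|u\|_{L^\infty}+\|v\|_{L^\infty})^{p-2}\|u-v\|_{L^2}.
\]
For $s\in(0,1)$ the first bound follows from the Gagliardo seminorm together with the pointwise bound $|F(u(x))-F(u(y))|\lesssim\max(|u|_\infty)^{p-2}|u(x)-u(y)|$. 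Integrating in time over $[0,T]$ gives a factor $T^{1/2}$, and $X^{s,b}\subset L^\infty_tH^s$ bounds $\|\psi\|_{L^\infty_tH^s}$. Hence $\Phi$ maps $\mathcal B$ to itself and contracts for small $T$. I would contract in the weaker metric ($s=0$ plus $L^\infty$) and use that $\mathcal B$ is closed in it, which avoids needing Lipschitz estimates in $H^s$. Uniqueness and local Lipschitz dependence then follow by the same difference estimates. Lipschitz dependence in the full $H^s$ norm would need a difference bound in $H^s$; if that fails for $2<p<4$-type regularity issues, I would restrict the claim to the $s=0$ metric or use $p\ge3$, where $F'$ is Lipschitz, to get the $H^s$ difference bound with a term $\|u-v\|_{L^\infty}\|v\|_{H^s}$.

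\textbf{Step 3: conservation and blow-up.} For charge conservation, I would regularize the data in $\mathrm{dom}(D)\cap L^\infty$, invoke the conservation of \cite[Theorem~1.1]{Borrelli2021} for the regular solutions, and pass to the limit using continuous dependence in $L^2$. The blow-up alternative follows because $T$ depends only on $\|\psi(t_0)\|_{H^s}+\|\psi(t_0)\|_{L^\infty}$. The $L^\infty$ norm at fixed times is controlled by the space-time bound, since the solution is continuous in $L^2$ and bounded a.e., so restarting near $T^\ast$ extends the solution.

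The main obstacle is Step 1. The uniform $L^\infty$ bound for the massive Dirac flow on the star graph relies on explicit kernel control after reflection, and on the fact that the spectral channel reduction preserves $L^\infty$.
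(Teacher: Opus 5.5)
Your local theory follows the paper's scheme: a Duhamel contraction in $X_T^{s,b}\cap L^\infty([0,T]\times G)$, the nonlinearity bounded through $L^2(0,T;H_D^s)$ to gain $T^{1/2}$, Gagliardo-seminorm Nemytskii estimates justified by \eqref{eq3.7}, the $L^\infty$ flow bound via channel diagonalization plus reflection, and a restart argument for the blow-up alternative.

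There are two local differences. First, you contract in the weak metric $X_T^{0,b}+L^\infty$. Closedness of your ball then rests on weak compactness in the Hilbert space $X^{s,b}$. This needs only the $L^2$ difference bound and yields $L^2$-stability from mere $L^\infty$ \emph{bounds}. The paper instead contracts in the full $Y_T^{s,b}$ norm using \eqref{eq3.9}. Since the theorem asserts Lipschitz dependence into $X_T^{s,b}$, you need \eqref{eq3.9} anyway. So drop the fallback of restricting to the $s=0$ metric, which would not prove the statement, and commit to your second option, which is exactly Lemma~\ref{Lem3.6}.

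Second, what you call the main obstacle is elementary. In the eigenbasis of $\sigma_1$ the line system is two transports with speeds $\pm1$, coupled by a zeroth-order term of size $m$. Gronwall along characteristics gives $\|e^{-itD_{\mathbb R}}f\|_{L^\infty}\le\sqrt2\,e^{m|t|}\|f\|_{L^\infty}$ with no Bessel kernels. The map $V\otimes I_{\mathbb C^2}$ is a constant unitary acting pointwise in $x$, not a spectral object, so it costs at most a factor $\sqrt N$ on $L^\infty$.

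The genuinely different route is charge conservation, and there your sketch omits the real work. Since $\psi_0$ may be discontinuous, approximants in $\mathrm{Dom}(D)\subset H^1$ cannot converge to it in $L^\infty$. You therefore need the following:
\begin{itemize}
\item regularizations bounded uniformly in $H_D^s\cap L^\infty$, for example $(1+iD/n)^{-1}=\int_0^\infty e^{-r}e^{-irD/n}\,dr$, which is bounded on $L^\infty$ by the flow bound;
\item a common existence time for these data;
\item $L^2$-stability under uniform $L^\infty$ bounds, which your weak-metric estimate does supply.
\end{itemize}
More seriously, you must show that the $\mathrm{Dom}(D)$-solutions of Borrelli--Carlone--Tentarelli coincide with yours and exist on all of $[0,T]$. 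Their lifespan is controlled only through the graph norm, which is unbounded along the approximating sequence. So a persistence-of-regularity estimate is required and is not supplied; for instance, Gronwall for $\partial_t\psi$ in $L^2$ using $\|\psi\|_{L^\infty}\le R$, combined with their blow-up alternative.

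The paper avoids all of this. It applies $P_M=E_D([-M,M])$ to the Duhamel formula and differentiates $\|P_M\psi(t)\|_{L^2}^2$, which is legitimate because $DP_M$ is bounded and $F\in L^1(0,T;L^2)$. It then uses $\langle F,\psi\rangle_{L^2}=\int_G|\psi|^p\,dx\in\mathbb R$. The only error term, $2\|F\|_{L^2}\|(I-P_M)\psi\|_{L^2}$, vanishes uniformly in $t$ by compactness of $\psi([0,T])$ in $L^2$. This is self-contained and works directly at the $H_D^s\cap L^\infty$ level.

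Finally, a slip you share with the paper's Lemmas~\ref{Lem3.2}--\ref{Lem3.3}: the Duhamel bound from $X^{s,b'-1}$ to $X^{s,b}$ with $b'<\frac12<b$ is false. Take $f\in H_D^s$ and $0<t_0<T$. Then $F=\delta_{t_0}(t)f$ lies in $X^{s,b'-1}$, because $2(b'-1)<-1$. Yet $\int_0^t e^{-i(t-\tau)D}F(\tau)\,d\tau=\mathbf 1_{t>t_0}e^{-i(t-t_0)D}f$ jumps at $t_0$, contradicting Lemma~\ref{Lem2.7}. The correct version uses $X^{s,-b'}$ with the factor $T^{1-b-b'}$ and $0\le b'<\frac12<b\le1-b'$. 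Since you only use $\|\mathbf 1_{[0,T]}F\|_{X^{s,0}}=\|F\|_{L^2(0,T;H_D^s)}$, the case $b'=0$ (factor $T^{1-b}$) suffices, and nothing else in your argument changes.
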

	
	Here \(X_{\mathrm{loc}}^{s,b}([0,T))\) means that the restriction of the
	solution to every compact subinterval \([0,T_0]\subset [0,T)\) belongs to
	\(X_{T_0}^{s,b}\). The notation \(L^\infty_{\mathrm{loc}}\) is understood in
	the same way.
	
	The restriction \(0\le s<1/2\) places the analysis below the trace threshold.
	In this range \(H_D^s(G)\) is equivalent to the edgewise Sobolev space
	\[
	\bigoplus_{e=1}^N H^s(\mathbb R_+;\mathbb C^2).
	\]
	Thus the required composition estimates can be carried out edge by edge in
	\(H^s\cap L^\infty\), and no compatibility condition for the nonlinear term
	at the vertex enters the argument. The \(L^\infty\) control is used in the
	Nemytskii estimates for the power-type nonlinearity.
	
	The paper is organized as follows. We first recall the spectral setting of the Dirac--Kirchhoff operator and introduce the Bourgain spaces $X^{s,b}$ and their restriction versions $X_T^{s,b}$. We then establish the linear cutoff and Duhamel estimates. Next we prove the edge-mode reduction, the $L^\infty$ linear estimate, and the $H^s\cap L^\infty$ nonlinear estimates used in the contraction argument. Finally, we complete the proof of Theorem~\ref{Thm1.1}, including charge conservation and the blow-up alternative.
	
	\section{Preliminaries and Function Spaces}\label{Sec2}
	
	Let $G$ be the noncompact $N$--star metric graph obtained by gluing $N$ copies of
	$\mathbb{R}_+$ at a common vertex $v$; see Figures~\ref{Fig1.1}--\ref{Fig1.3} for typical examples.
	We write
	\[
	G=\bigcup_{e=1}^N \mathbb{R}_+^{(e)},\qquad x\in(0,\infty)\ \text{on each edge}.
	\]
	A spinor $\psi=(\varphi,\chi)^{\mathsf T}$ on $G$ is given edgewise by
	$\psi_e=(\varphi_e,\chi_e)^{\mathsf T}:\mathbb{R}_+^{(e)}\to\mathbb{C}^2$.
	Set
	\[
	L^2(G;\mathbb{C}^2)=\bigoplus_{e=1}^N L^2(\mathbb{R}_+^{(e)};\mathbb{C}^2),
	\qquad
	\|\psi\|_{L^2(G)}^2=\sum_{e=1}^N\int_0^\infty |\psi_e(x)|^2\,dx,
	\]
	where $|\cdot|$ is the Euclidean norm on $\mathbb{C}^2$, and
	$\langle u,v\rangle_{\mathbb{C}^2}=u^*v$ denotes the standard Hermitian inner product,
	linear in the second argument.
	
	For functions on an edge we use the notation $C_c^\infty([0,\infty))$ for smooth functions
	compactly supported in $[0,\infty)$.
	We use the Pauli matrices
	\[
	\sigma_1=
	\begin{pmatrix}
		0 & 1\\
		1 & 0
	\end{pmatrix},
	\qquad
	\sigma_3=
	\begin{pmatrix}
		1 & 0\\
		0 & -1
	\end{pmatrix}.
	\]
	Fix $m\ge 0$. On each edge we consider the free Dirac expression
	\[
	D_e=-i\sigma_1\partial_x+m\sigma_3.
	\]
	
	At the vertex we impose the Dirac--Kirchhoff conditions
	\begin{equation}\label{eq2.1}
		\varphi_1(0)=\cdots=\varphi_N(0),
		\qquad
		\sum_{e=1}^N \chi_e(0)=0.
	\end{equation}
	
	We set the first-order Sobolev space on the graph
	\[
	H^1(G;\mathbb{C}^2)=\bigoplus_{e=1}^N H^1(\mathbb{R}_+^{(e)};\mathbb{C}^2),
	\]
	so that the traces $\psi_e(0)$ are well-defined for $\psi\in H^1(G;\mathbb{C}^2)$.
	We define the maximal Dirac operator $D_{\max}$ on $L^2(G;\mathbb{C}^2)$ by
	\[
	(D_{\max}\psi)_e = D_e\psi_e,
	\qquad
	\mathrm{Dom}(D_{\max}) = H^1(G;\mathbb{C}^2).
	\]
	We define the Dirac--Kirchhoff realization $D$ as the restriction of $D_{\max}$:
	\begin{equation}\label{eq2.2}
		(D\psi)_e = D_e\psi_e,
		\qquad
		\mathrm{Dom}(D)=\Bigl\{\psi\in H^1(G;\mathbb{C}^2)\,:\,\psi \text{ satisfies }\eqref{eq2.1}\Bigr\}.
	\end{equation}

	The integration-by-parts identity and the self-adjointness of the Dirac--Kirchhoff realization are standard; see, for example, \cite{BolteHarrison2003,BullaTrenkler1990,Borrelli2021}. We include the details only to fix notation and the convention for the boundary form used later.
	
	\begin{lemma}\label{Lem2.1}
		For all $\psi,\phi\in \mathrm{Dom}(D_{\max})=H^1(G;\mathbb{C}^2)$ one has
		\begin{equation}\label{eq2.3}
			\langle D_{\max}\psi,\phi\rangle_{L^2(G)}-\langle \psi,D_{\max}\phi\rangle_{L^2(G)}
			=-i\sum_{e=1}^N \bigl\langle \psi_e(0),\sigma_1\phi_e(0)\bigr\rangle_{\mathbb{C}^2}.
		\end{equation}
		In particular, the operator $D$ in \eqref{eq2.2} is symmetric on $L^2(G;\mathbb{C}^2)$.
	\end{lemma}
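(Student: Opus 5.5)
The plan is to prove \eqref{eq2.3} edgewise by integration by parts, first for smooth compactly supported spinors and then for all of $H^1$ by density. The second step is to check that the boundary sum vanishes under \eqref{eq2.1}.

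\emph{Identity on one edge.} Fix an edge $e$ and take $\psi_e,\phi_e\in C_c^\infty([0,\infty);\mathbb C^2)$. Since $\sigma_1$ and $\sigma_3$ are Hermitian, and the inner product is antilinear in the first slot, we have
\[
\langle D_e\psi_e,\phi_e\rangle=\int_0^\infty \bigl(i\,\psi_e'^{\,*}\sigma_1\phi_e+m\,\psi_e^*\sigma_3\phi_e\bigr)\,dx,
\qquad
\langle \psi_e,D_e\phi_e\rangle=\int_0^\infty \bigl(-i\,\psi_e^*\sigma_1\phi_e'+m\,\psi_e^*\sigma_3\phi_e\bigr)\,dx .
\]
The mass terms cancel in the difference. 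What remains is
\[
i\int_0^\infty \bigl(\psi_e^*\sigma_1\phi_e\bigr)'\,dx=-i\,\psi_e(0)^*\sigma_1\phi_e(0),
\]
because the integrand has compact support. Summing over $e$ gives \eqref{eq2.3} for smooth compactly supported data.

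\emph{Extension to $H^1$.} Both sides of \eqref{eq2.3} are continuous on $H^1(\mathbb R_+)\times H^1(\mathbb R_+)$. For the left side this is because $D_e$ is bounded from $H^1$ to $L^2$. For the right side it follows from the trace bound $|u(0)|\lesssim\|u\|_{H^1(\mathbb R_+)}$. Since $C_c^\infty([0,\infty))$ is dense in $H^1(\mathbb R_+)$, the identity extends to all $\psi,\phi\in H^1(G;\mathbb C^2)$. This density step is the only point requiring care: it replaces any discussion of decay at infinity. Alternatively, one can use directly that $H^1(\mathbb R_+)$ functions tend to $0$ as $x\to\infty$.

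\emph{Symmetry.} Let $\psi=(\varphi,\chi)^{\mathsf T}$ and $\phi=(\tilde\varphi,\tilde\chi)^{\mathsf T}$ lie in $\mathrm{Dom}(D)$. Write $a=\varphi_e(0)$ and $b=\tilde\varphi_e(0)$ for the common vertex values. Then
\[
\sum_{e=1}^N\psi_e(0)^*\sigma_1\phi_e(0)=\sum_{e=1}^N\bigl(\overline{\varphi_e(0)}\,\tilde\chi_e(0)+\overline{\chi_e(0)}\,\tilde\varphi_e(0)\bigr)
=\bar a\sum_{e=1}^N\tilde\chi_e(0)+\Bigl(\overline{\sum_{e=1}^N\chi_e(0)}\Bigr)b=0
\]
by \eqref{eq2.1}. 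Hence $\langle D\psi,\phi\rangle=\langle\psi,D\phi\rangle$ for all $\psi,\phi\in\mathrm{Dom}(D)$.

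Finally, $\mathrm{Dom}(D)$ contains $\bigoplus_e C_c^\infty((0,\infty);\mathbb C^2)$, since such functions have zero trace and so satisfy \eqref{eq2.1} trivially. This space is dense in $L^2(G;\mathbb C^2)$, so $D$ is densely defined and therefore symmetric.
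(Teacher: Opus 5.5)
Your proposal is correct and follows the same route as the paper: integrate by parts edgewise for spinors in $C_c^\infty([0,\infty);\mathbb C^2)$, extend to $H^1$ using density and continuity of the trace, then observe that the vertex conditions kill the boundary sum. You also write out explicitly the cancellation in the boundary sum and the fact that $D$ is densely defined, both of which the paper leaves implicit.
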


	\begin{proof}
		We first prove \eqref{eq2.3} for edgewise test functions by integration by parts on each edge.
		Let $\psi,\phi$ be such that for every $e$ one has
		$\psi_e,\phi_e\in C_c^\infty([0,\infty);\mathbb{C}^2)$. On each edge $\mathbb{R}_+^{(e)}$ we have
		\[
		\int_0^\infty \left\langle -i\sigma_1\partial_x\psi_e(x),\phi_e(x)\right\rangle_{\mathbb{C}^2}\,dx
		-\int_0^\infty \left\langle \psi_e(x),-i\sigma_1\partial_x\phi_e(x)\right\rangle_{\mathbb{C}^2}\,dx
		=-i\left\langle \psi_e(0),\sigma_1\phi_e(0)\right\rangle_{\mathbb{C}^2},
		\]
		where the boundary term at $+\infty$ vanishes since $\psi_e,\phi_e$ are compactly supported.
		Moreover, since $\sigma_3$ is Hermitian,
		\[
		\int_0^\infty \langle m\sigma_3\psi_e,\phi_e\rangle_{\mathbb{C}^2}\,dx
		-\int_0^\infty \langle \psi_e,m\sigma_3\phi_e\rangle_{\mathbb{C}^2}\,dx=0.
		\]
		Summing over $e=1,\dots,N$ yields \eqref{eq2.3} for such test functions.
		
		Now let $\psi,\phi\in H^1(G;\mathbb{C}^2)$. By the standard density of $C_c^\infty([0,\infty))$ in
		$H^1(\mathbb{R}_+)$ on each edge, we can choose sequences $\psi^{(n)},\phi^{(n)}\in H^1(G;\mathbb{C}^2)$
		such that for each $e$,
		$\psi_e^{(n)},\phi_e^{(n)}\in C_c^\infty([0,\infty);\mathbb{C}^2)$ and
		\[
		\psi^{(n)}\to\psi,\qquad \phi^{(n)}\to\phi
		\quad\text{in }H^1(G;\mathbb{C}^2).
		\]
		Then $D_{\max}\psi^{(n)}\to D_{\max}\psi$ and $D_{\max}\phi^{(n)}\to D_{\max}\phi$ in $L^2(G;\mathbb{C}^2)$.
		The trace map $H^1(\mathbb{R}_+;\mathbb{C}^2)\to \mathbb{C}^2$, $u\mapsto u(0)$, is continuous on each edge, hence
		$\psi^{(n)}_e(0)\to\psi_e(0)$ and $\phi^{(n)}_e(0)\to\phi_e(0)$ in $\mathbb{C}^2$ for every $e$.
		Passing to the limit in \eqref{eq2.3} for $(\psi^{(n)},\phi^{(n)})$ gives \eqref{eq2.3} for all
		$\psi,\phi\in H^1(G;\mathbb{C}^2)$.
		
		Finally, if $\psi,\phi\in\mathrm{Dom}(D)$, then the vertex conditions \eqref{eq2.1} imply
		\[
		\sum_{e=1}^N \left\langle \psi_e(0),\sigma_1\phi_e(0)\right\rangle_{\mathbb{C}^2}=0,
		\]
		hence the boundary term in \eqref{eq2.3} vanishes and $D$ is symmetric.
	\end{proof}

	\begin{proposition}\label{Prop2.2}
		The Dirac--Kirchhoff operator $D$ defined by \eqref{eq2.2} is self-adjoint on $L^2(G;\mathbb{C}^2)$.
	\end{proposition}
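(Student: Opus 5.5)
Since Lemma~\ref{Lem2.1} already gives symmetry, $D\subset D^*$, so the task is to show $\mathrm{Dom}(D^*)\subset\mathrm{Dom}(D)$. The plan has two steps. First, we show that $D^*$ is a restriction of $D_{\max}$, which gives $\mathrm{Dom}(D^*)\subset H^1(G;\mathbb C^2)$. Second, we use the boundary form \eqref{eq2.3} to show that every $\phi\in\mathrm{Dom}(D^*)$ satisfies the vertex conditions \eqref{eq2.1}.

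For the first step, fix $\phi\in\mathrm{Dom}(D^*)$ and set $f=D^*\phi\in L^2(G;\mathbb C^2)$. Test against spinors $\psi$ that are supported on a single edge $e$ with $\psi_e\in C_c^\infty((0,\infty);\mathbb C^2)$. Such $\psi$ lie in $\mathrm{Dom}(D)$, since they vanish at the vertex. The identity $\langle D\psi,\phi\rangle=\langle\psi,f\rangle$ then says that, in the distributional sense on $(0,\infty)$,
\[
-i\sigma_1\phi_e' + m\sigma_3\phi_e = f_e .
\]
Because $\sigma_1$ is invertible, this gives $\phi_e'=-i\sigma_1(f_e-m\sigma_3\phi_e)\in L^2(0,\infty)$, so $\phi_e\in H^1(\mathbb R_+;\mathbb C^2)$. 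Doing this on every edge yields $\phi\in H^1(G;\mathbb C^2)=\mathrm{Dom}(D_{\max})$ and $D^*\phi=D_{\max}\phi$.

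For the second step, apply \eqref{eq2.3} with an arbitrary $\psi\in\mathrm{Dom}(D)$ and the given $\phi$. The left-hand side vanishes because $\langle D\psi,\phi\rangle=\langle\psi,D^*\phi\rangle=\langle\psi,D_{\max}\phi\rangle$. Writing $\phi_e(0)=(a_e,b_e)$ and $\psi_e(0)=(\alpha_e,\beta_e)$, and noting $\sigma_1\phi_e(0)=(b_e,a_e)$, we obtain
\[
\sum_{e=1}^N\bigl(\overline{\alpha_e}\,b_e+\overline{\beta_e}\,a_e\bigr)=0
\]
for all admissible boundary data. Here admissible means $\alpha_e=\alpha$ is common to all edges and $\sum_e\beta_e=0$.

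We first check that every such boundary vector is actually realized by some $\psi\in\mathrm{Dom}(D)$. To do this, take a cutoff $\eta\in C_c^\infty([0,\infty))$ with $\eta(0)=1$ and set $\psi_e=\eta\,(\alpha_e,\beta_e)^{\mathsf T}$.

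We then read off the conditions on $\phi$ from two choices of data.
\begin{itemize}
\item Choosing $\beta\equiv0$ and $\alpha$ arbitrary gives $\overline{\alpha}\sum_e b_e=0$, hence $\sum_e\chi_e(0)=\sum_e b_e=0$.
\item Choosing $\alpha=0$ and $\beta=\varepsilon_j-\varepsilon_k$ (the difference of two standard basis vectors in $\mathbb C^N$) gives $a_j=a_k$ for all $j,k$, hence $\varphi_1(0)=\cdots=\varphi_N(0)$.
\end{itemize}
Thus $\phi$ satisfies \eqref{eq2.1}, so $\phi\in\mathrm{Dom}(D)$, and therefore $D=D^*$.

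The only step requiring care is the first one, the elliptic-type regularity. It hinges on $\sigma_1$ being invertible, so that a distributional first-order identity upgrades $L^2$ to $H^1$. This is a standard one-dimensional argument: a distribution on an interval whose derivative lies in $L^1_{\mathrm{loc}}$ is absolutely continuous. Integrability at infinity then follows directly from $\phi_e,\phi_e'\in L^2$. I do not expect a real obstacle here.
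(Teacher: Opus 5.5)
Your proof is correct and follows the paper's argument essentially step for step: testing against interior-supported spinors on a single edge gives $\mathrm{Dom}(D^*)\subset H^1(G;\mathbb C^2)$ with $D^*=D_{\max}$ there, and the boundary form \eqref{eq2.3}, tested against cutoff spinors $\eta\,(\alpha,\beta_e)^{\mathsf T}$ with the two choices $\alpha=0$ and $\beta\equiv0$, recovers the vertex conditions \eqref{eq2.1}. One harmless slip: since $(-i\sigma_1)^{-1}=i\sigma_1$, your regularity identity should read $\phi_e'=i\sigma_1(f_e-m\sigma_3\phi_e)$, which does not affect the conclusion $\phi_e'\in L^2$.
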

	
	\begin{proof}
		By Lemma~\ref{Lem2.1}, $D$ is symmetric, hence $D\subset D^*$.
		
		Let $\psi\in\mathrm{Dom}(D^*)$. Then there exists $f\in L^2(G;\mathbb{C}^2)$ such that
		\[
		\langle D\phi,\psi\rangle_{L^2(G)}=\langle \phi,f\rangle_{L^2(G)}
		\quad\text{for all }\phi\in\mathrm{Dom}(D).
		\]
		Fix an edge $e$ and take $\phi\in\mathrm{Dom}(D)$ supported in the interior of that edge, with
		$\phi_e\in C_c^\infty((0,\infty);\mathbb{C}^2)$ and $\phi_{e'}=0$ for $e'\ne e$.
		Then $\phi_e(0)=0$, so $\phi$ satisfies \eqref{eq2.1}. Using the above identity and the formal expression
		$D_e=-i\sigma_1\partial_x+m\sigma_3$, we obtain
		\[
		\int_0^\infty \langle D_e\phi_e(x),\psi_e(x)\rangle_{\mathbb{C}^2}\,dx
		=\int_0^\infty \langle \phi_e(x),f_e(x)\rangle_{\mathbb{C}^2}\,dx
		\quad\text{for all }\phi_e\in C_c^\infty((0,\infty);\mathbb{C}^2).
		\]
		This implies $\psi_e\in H^1(\mathbb{R}_+^{(e)};\mathbb{C}^2)$ and $f_e=D_e\psi_e$ in
		$L^2(\mathbb{R}_+^{(e)};\mathbb{C}^2)$. Since $e$ is arbitrary, we obtain
		$\psi\in H^1(G;\mathbb{C}^2)=\mathrm{Dom}(D_{\max})$ and $D^*\psi=D_{\max}\psi$.
		
		For every $\phi\in \mathrm{Dom}(D)$, Lemma~\ref{Lem2.1} gives
		\[
		0=\langle D^*\psi,\phi\rangle_{L^2(G)}-\langle \psi,D\phi\rangle_{L^2(G)}
		=-i\sum_{e=1}^N \bigl\langle \psi_e(0),\sigma_1\phi_e(0)\bigr\rangle_{\mathbb{C}^2}.
		\]
		Write $\psi_e(0)=\binom{a_e}{b_e}$ and $\phi_e(0)=\binom{\alpha}{\beta_e}$, where $\alpha\in\mathbb{C}$ and
		$(\beta_e)_{e=1}^N\in\mathbb{C}^N$ satisfy $\sum_{e=1}^N\beta_e=0$ because $\phi\in\mathrm{Dom}(D)$.
		Indeed, fix $\eta\in C_c^\infty([0,\infty))$ with $\eta(0)=1$ and set
		$\phi_e(x)=(\alpha,\beta_e)^{\mathsf T}\eta(x)$; then $\phi\in\mathrm{Dom}(D)$ and $\phi_e(0)=(\alpha,\beta_e)^{\mathsf T}$.
		
		Choosing such $\phi$ with prescribed $(\alpha,\beta_e)$, we obtain
		\[
		\sum_{e=1}^N \bigl(\overline{a_e}\,\beta_e+\overline{b_e}\,\alpha\bigr)=0
		\quad\text{for all }\alpha\in\mathbb{C},\ (\beta_e)\in\mathbb{C}^N \text{ with } \sum_{e=1}^N\beta_e=0.
		\]
		Taking $\alpha=0$ yields $\sum_{e=1}^N \overline{a_e}\,\beta_e=0$ for all $(\beta_e)$ with zero sum, hence
		$a_1=\cdots=a_N$. Taking $\beta_e=0$ and arbitrary $\alpha$ yields $\sum_{e=1}^N b_e=0$.
		Therefore $\psi$ satisfies \eqref{eq2.1}, so $\psi\in\mathrm{Dom}(D)$, and
		$\mathrm{Dom}(D^*)\subset \mathrm{Dom}(D)$.
		
		Hence $\mathrm{Dom}(D^*)=\mathrm{Dom}(D)$ and $D$ is self-adjoint.
	\end{proof}

	\subsection{Spectral transform and Bourgain spaces}
	
	For $s\in\mathbb{R}$ we define the operator Sobolev space $H_D^s(G)$ as follows.
	If $s\ge 0$, set
	\[
	H_D^s(G)=\mathrm{Dom}\bigl(\langle D\rangle^s\bigr),
	\qquad
	\|\psi\|_{H_D^s(G)}=\|\langle D\rangle^s\psi\|_{L^2(G)},
	\qquad
	\langle D\rangle=(I+D^2)^{1/2}.
	\]
	If $s<0$, define $H_D^s(G)$ as the completion of $L^2(G;\mathbb{C}^2)$ with respect to the norm
	$\|\psi\|_{H_D^s(G)}=\|\langle D\rangle^s\psi\|_{L^2(G)}$.
	
	We denote by $E_D(\cdot)$ the projection-valued spectral measure of the self-adjoint operator $D$. The full spectral measure, including a possible atom at $\lambda=0$, is used throughout. Since the Bourgain weights below are finite at $\lambda=0$, zero spectral components are covered by the same framework.
	
	We write
	\[
	\langle \xi\rangle=(1+|\xi|^{2})^{1/2}.
	\]
	By the spectral theorem for the self-adjoint operator $D$, there exist a Borel measure $\mu_D$ on $\mathbb{R}$,
	a measurable family of Hilbert spaces $\{\mathcal{H}_\lambda\}_{\lambda\in\mathbb{R}}$,
	and a unitary map
	\[
	\mathcal{F}_D:L^2(G;\mathbb{C}^2)\to L^2(\mathbb{R},d\mu_D;\mathcal{H}_\lambda)
	\]
	such that the following holds.
	For any Borel measurable function $m:\mathbb{R}\to\mathbb{C}$, the operator $m(D)$ is defined by functional calculus with domain
	\[
	\mathrm{Dom}(m(D))
	=\Bigl\{f\in L^2(G;\mathbb{C}^2):\ m(\lambda)(\mathcal{F}_D f)(\lambda)\in L^2(\mathbb{R},d\mu_D;\mathcal{H}_\lambda)\Bigr\},
	\]
	and for every $f\in\mathrm{Dom}(m(D))$ one has
	\[
	\bigl(\mathcal{F}_D(m(D)f)\bigr)(\lambda)=m(\lambda)\,(\mathcal{F}_D f)(\lambda)
	\quad\text{for }\mu_D\text{-a.e. }\lambda\in\mathbb{R}.
	\]
	In particular, for every $f\in \mathrm{Dom}(D)$,
	\[
	(\mathcal{F}_D(D f))(\lambda)=\lambda(\mathcal{F}_D f)(\lambda)
	\quad\text{for }\mu_D\text{-a.e. }\lambda\in\mathbb{R}.
	\]
	More generally, for every $s\in\mathbb{R}$ and every $f\in \mathrm{Dom}(\langle D\rangle^{s})$ one has
	\[
	\bigl(\mathcal{F}_D(\langle D\rangle^{s} f)\bigr)(\lambda)
	=\langle\lambda\rangle^{s}(\mathcal{F}_D f)(\lambda)
	\quad\text{for }\mu_D\text{-a.e. }\lambda\in\mathbb{R}.
	\]
	By the unitarity of $\mathcal{F}_D$, this yields
	\begin{equation}\label{eq2.4}
		\|f\|_{H_D^s(G)}^2
		=\|\langle D\rangle^{s} f\|_{L^2(G)}^{2}
		=\int_{\mathbb{R}}\langle\lambda\rangle^{2s}\|(\mathcal{F}_D f)(\lambda)\|_{\mathcal{H}_\lambda}^2\,d\mu_D(\lambda),
	\end{equation}
	for all $f\in \mathrm{Dom}(\langle D\rangle^{s})$. By density, \eqref{eq2.4} extends to all $f\in H_D^s(G)$.
	
	For a spacetime function $u:\mathbb{R}\times G\to\mathbb{C}^2$ we define its joint time--spectral transform by
	\[
	\widetilde u(\tau,\lambda)=(\mathcal{F}_t\mathcal{F}_D u)(\tau,\lambda),
	\qquad
	\mathcal{F}_t u(\tau)=\frac{1}{\sqrt{2\pi}}\int_{\mathbb{R}}e^{-it\tau}u(t)\,dt,
	\]
	where $\mathcal{F}_D$ acts in the space variable for each fixed $t$, and $\mathcal{F}_t$ is understood as the Fourier transform
	in $t$ of Hilbert space-valued functions, in the usual Bochner sense.
	
	We fix the dense core
	\[
	\mathcal{U}
	=\bigcup_{M\in\mathbb{N}} C_c^\infty\bigl(\mathbb{R};\mathrm{Dom}(\langle D\rangle^M)\bigr),
	\]
	on which all the transforms and multipliers below are well-defined.
	
	\begin{definition}\label{Def2.3}
		Let $s,b\in\mathbb{R}$.
		We define $X^{s,b}(G)$ as the completion of $\mathcal{U}$ under the norm
		\begin{equation}\label{eq2.5}
			\|u\|_{X^{s,b}}^2
			=\int_{\mathbb{R}}\int_{\mathbb{R}}
			\langle\lambda\rangle^{2s}\langle\tau+\lambda\rangle^{2b}
			\|\widetilde u(\tau,\lambda)\|_{\mathcal{H}_\lambda}^2
			\,d\tau\,d\mu_D(\lambda).
		\end{equation}
	\end{definition}

	\begin{lemma}\label{Lem2.4}
		Let $u\in\mathcal{U}$ and set $v(t)=e^{itD}u(t)$. Then $v\in\mathcal{U}$ and
		\[
		\|u\|_{X^{s,b}}^{2}
		=\int_{\mathbb{R}}\int_{\mathbb{R}}
		\langle\lambda\rangle^{2s}\langle\tau\rangle^{2b}
		\|(\mathcal{F}_t\mathcal{F}_D v)(\tau,\lambda)\|_{\mathcal{H}_\lambda}^{2}
		\,d\tau\,d\mu_D(\lambda).
		\]
	\end{lemma}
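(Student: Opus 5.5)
The plan is to verify first that $v$ stays in the core $\mathcal U$, and then to compute $\mathcal F_t\mathcal F_D v$ explicitly as a translate in $\tau$ of $\widetilde u$. The norm identity will then follow from a change of variables in $\tau$, carried out separately for each fixed $\lambda$.

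\emph{Step 1: $v\in\mathcal U$.} Let $u\in C_c^\infty(\mathbb R;\mathrm{Dom}(\langle D\rangle^M))$ for some $M$. The group $e^{itD}$ is unitary and commutes with $\langle D\rangle^M$, so $v(t)=e^{itD}u(t)$ takes values in $\mathrm{Dom}(\langle D\rangle^M)$. It also has the same compact support in $t$ as $u$.

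For smoothness, note that on $\mathrm{Dom}(\langle D\rangle^{k+1})$ the map $t\mapsto e^{itD}f$ is $C^1$ into $\mathrm{Dom}(\langle D\rangle^{k})$, with derivative $iDe^{itD}f$. By the product rule, $v$ is therefore $C^1$ with values in $\mathrm{Dom}(\langle D\rangle^{M-1})$, and
\[
\partial_t v=e^{itD}\bigl(iDu+\partial_t u\bigr).
\]
Iterating this only costs one power of $\langle D\rangle$ per derivative. Hence $v\in C_c^\infty$ only into $\bigcap_k \mathrm{Dom}(\langle D\rangle^{M-k})$ up to finite order, which is a real loss. Two fixes are possible. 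One is to interpret $\mathcal U$ with $u$ valued in $\mathrm{Dom}(\langle D\rangle^\infty)$. The other, which I would adopt, is to work directly in the spectral representation: there $\mathcal F_D v(t,\lambda)=e^{it\lambda}\mathcal F_D u(t,\lambda)$, and all transforms are defined pointwise in $\lambda$. I expect this regularity and membership bookkeeping to be the main obstacle. The norm identity itself is purely algebraic and only requires $\mathcal F_D u(\cdot,\lambda)$ to be integrable in $t$ for a.e.\ $\lambda$.

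\emph{Step 2: spectral computation.} By the functional calculus recalled above,
\[
(\mathcal F_D v(t))(\lambda)=e^{it\lambda}(\mathcal F_D u(t))(\lambda)
\]
for $\mu_D$-a.e.\ $\lambda$. Taking the Fourier transform in $t$ (Bochner, with values in $\mathcal H_\lambda$) gives
\[
(\mathcal F_t\mathcal F_D v)(\tau,\lambda)=\frac1{\sqrt{2\pi}}\int e^{-it(\tau-\lambda)}(\mathcal F_Du(t))(\lambda)\,dt=\widetilde u(\tau-\lambda,\lambda).
\]
Measurability in $(\tau,\lambda)$ follows from the measurability of the direct integral. Fubini applies because $u$ has compact time support and $\int\|\mathcal F_Du(t)(\lambda)\|^2\,d\mu_D$ is bounded in $t$.

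\emph{Step 3: change of variables.} For fixed $\lambda$, substitute $\tau\mapsto\tau+\lambda$ in the $d\tau$ integral, which has Lebesgue measure and is translation invariant. This yields
\[
\int\langle\tau\rangle^{2b}\|\widetilde u(\tau-\lambda,\lambda)\|^2\,d\tau=\int\langle\tau+\lambda\rangle^{2b}\|\widetilde u(\tau,\lambda)\|^2\,d\tau.
\]
Multiplying by $\langle\lambda\rangle^{2s}$, integrating against $d\mu_D$, and using Tonelli for these nonnegative integrands recovers \eqref{eq2.5}.
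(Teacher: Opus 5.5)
Your proof of the norm identity follows the paper's route. The paper also writes $(\mathcal F_D v)(t,\lambda)=e^{it\lambda}(\mathcal F_D u)(t,\lambda)$, obtains $(\mathcal F_t\mathcal F_D v)(\tau,\lambda)=\widetilde u(\tau-\lambda,\lambda)$, and substitutes $\sigma=\tau-\lambda$ for each fixed $\lambda$ before integrating against $\langle\lambda\rangle^{2s}\,d\mu_D$. Your remarks on time integrability and Tonelli only make this more careful.

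The real difference is in Step 1, and there your suspicion is correct. It is stronger than a bookkeeping issue. The paper deduces $v\in\mathcal U$ solely from the pointwise fact that $u(t)\in\mathrm{Dom}(\langle D\rangle^M)$ if and only if $v(t)\in\mathrm{Dom}(\langle D\rangle^M)$, which ignores smoothness in $t$. The loss of one power of $\langle D\rangle$ per time derivative that you found cannot be avoided. Take $\eta\in C_c^\infty(\mathbb R)$ with $\eta\equiv1$ near $0$, and $f\in\mathrm{Dom}(\langle D\rangle^M)\setminus\mathrm{Dom}(\langle D\rangle^{M+1})$, which exists because $D$ is unbounded. Then $u=\eta f\in\mathcal U$, but $v(t)=e^{itD}f$ near $t=0$. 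By Stone's theorem, this map is $M+1$ times differentiable in $L^2$ only if $f\in\mathrm{Dom}(D^{M+1})=\mathrm{Dom}(\langle D\rangle^{M+1})$. Hence $v\notin C^\infty(\mathbb R;L^2(G;\mathbb C^2))$, so $v\notin\mathcal U$, and the first assertion of the lemma is false as literally stated. What is true is $v\in C_c^k(\mathbb R;\mathrm{Dom}(\langle D\rangle^{M-k}))$ for $0\le k\le M$.

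You should therefore present one of your two fixes as necessary rather than optional:
either take the core to be $C_c^\infty(\mathbb R;\bigcap_M\mathrm{Dom}(\langle D\rangle^M))$, where your product-rule computation does put $v$ in the core;
or drop the membership claim and state the identity for any $u$ whose spectral components $\mathcal F_D u(\cdot,\lambda)$ are integrable in $t$ for $\mu_D$-a.e.\ $\lambda$. That is all your Steps 2 and 3 use.

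Nothing later in the paper depends on $v\in\mathcal U$; for example, the free-flow bound in Lemma 3.1 computes its space-time transform directly.
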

	
	\begin{proof}
		Let $u\in\mathcal{U}$ and define $v(t)=e^{itD}u(t)$.
		Since $e^{itD}$ commutes with $\langle D\rangle^M$ by functional calculus and is unitary on $L^2(G;\mathbb{C}^2)$,
		one has $u(t)\in\mathrm{Dom}(\langle D\rangle^M)$ for all $t$ if and only if $v(t)\in\mathrm{Dom}(\langle D\rangle^M)$ for all $t$.
		Hence $v\in\mathcal{U}$.
		
		Using the spectral representation of $D$, for $\mu_D$-a.e. $\lambda$ we have, for each $t\in\mathbb{R}$,
		\[
		(\mathcal{F}_D v)(t,\lambda)=e^{it\lambda}(\mathcal{F}_D u)(t,\lambda).
		\]
		Taking $\mathcal{F}_t$ yields
		\[
		(\mathcal{F}_t\mathcal{F}_D v)(\tau,\lambda)
		=(\mathcal{F}_t\mathcal{F}_D u)(\tau-\lambda,\lambda)
		=\widetilde u(\tau-\lambda,\lambda).
		\]
		Therefore
		\[
		\int_{\mathbb{R}}\langle\tau\rangle^{2b}
		\|(\mathcal{F}_t\mathcal{F}_D v)(\tau,\lambda)\|_{\mathcal{H}_\lambda}^{2}\,d\tau
		=
		\int_{\mathbb{R}}\langle\tau\rangle^{2b}
		\|\widetilde u(\tau-\lambda,\lambda)\|_{\mathcal{H}_\lambda}^{2}\,d\tau.
		\]
		Substituting $\sigma=\tau-\lambda$ (so that $\tau=\sigma+\lambda$) gives
		\[
		\int_{\mathbb{R}}\langle\tau\rangle^{2b}
		\|\widetilde u(\tau-\lambda,\lambda)\|_{\mathcal{H}_\lambda}^{2}\,d\tau
		=
		\int_{\mathbb{R}}\langle\sigma+\lambda\rangle^{2b}
		\|\widetilde u(\sigma,\lambda)\|_{\mathcal{H}_\lambda}^{2}\,d\sigma.
		\]
		Multiplying by $\langle\lambda\rangle^{2s}$, integrating in $\lambda$ against $d\mu_D(\lambda)$,
		and using \eqref{eq2.5} yields the claimed identity.
	\end{proof}
	
	\begin{definition}\label{Def2.5}
		Let $T>0$.
		The restriction space $X_T^{s,b}$ consists of all functions $u$ defined on $[0,T]$ such that
		\[
		\|u\|_{X_T^{s,b}}
		=\inf\{\|U\|_{X^{s,b}}:\ U\in X^{s,b},\ U(t)=u(t)\ \text{for }t\in[0,T]\}
		<\infty.
		\]
	\end{definition}
	
	\begin{lemma}\label{Lem2.6}
		For every $s,b\in\mathbb{R}$ and $T>0$, the space $X_T^{s,b}$ is a Banach space.
		Moreover, if $0<T_1\le T_2$ and $u$ is defined on $[0,T_2]$, then
		\[
		\|u|_{[0,T_1]}\|_{X_{T_1}^{s,b}}\le \|u\|_{X_{T_2}^{s,b}}.
		\]
	\end{lemma}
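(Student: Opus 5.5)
The plan is to identify $X_T^{s,b}$ with a quotient of $X^{s,b}$ by a closed subspace. Monotonicity then comes directly from the definition of the infimum.

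\emph{Step 1 (the kernel of restriction is closed).} Set
\[
\mathcal N_T=\{U\in X^{s,b}:\ U(t)=0 \text{ for } t\in[0,T]\}.
\]
To give ``$U(t)$'' a meaning for a general element of the completion, I will use the unitary change of variables from Lemma~\ref{Lem2.4}. Under $v(t)=e^{itD}U(t)$, the space $X^{s,b}$ is isometric to $H^b(\mathbb R_t;H_D^s(G))$, the vector-valued Sobolev space obtained via the Fourier transform in $t$. Its elements are $H_D^s(G)$-valued tempered distributions in $t$, and in particular locally integrable functions when $b\ge0$. I will therefore interpret ``$U=0$ on $[0,T]$'' as vanishing in the sense of distributions on $(0,T)$. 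For each test function $\eta\in C_c^\infty((0,T))$ and each $g$ in a dense subset of $H_D^{-s}(G)$, the pairing
\[
U\mapsto \int \eta(t)\,\langle e^{itD}U(t),g\rangle\,dt
\]
is a continuous linear functional on $X^{s,b}$. Indeed, $\eta\otimes g$ lies in the dual space $H^{-b}(\mathbb R;H_D^{-s})$, so the functional is bounded by $\|\eta\otimes g\|_{H^{-b}}\|U\|_{X^{s,b}}$. Since $e^{itD}$ is invertible, $\mathcal N_T$ is exactly the intersection of the kernels of these functionals. Hence $\mathcal N_T$ is a closed subspace.

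\emph{Step 2 (Banach property).} Two extensions $U_1,U_2$ of the same $u$ differ by an element of $\mathcal N_T$. Therefore the map $u\mapsto [U]$ identifies $X_T^{s,b}$ isometrically with the quotient $X^{s,b}/\mathcal N_T$, and under this identification $\|u\|_{X_T^{s,b}}$ is precisely the quotient norm. Definiteness follows from closedness: if $\|u\|_{X^{s,b}_T}=0$, there are extensions $U_n\to0$ in $X^{s,b}$. Then, for any fixed extension $U$, the differences $U-U_n$ lie in $\mathcal N_T$ and converge to $U$, so $U\in\mathcal N_T$ and $u=0$. Completeness is inherited, since the quotient of a Banach space by a closed subspace is Banach. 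Concretely, one takes a Cauchy sequence with $\sum\|u_{k+1}-u_k\|<\infty$, chooses extensions $W_k$ of $u_{k+1}-u_k$ with $\|W_k\|\le 2\|u_{k+1}-u_k\|$, and sums them in $X^{s,b}$.

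\emph{Step 3 (monotonicity).} Let $U$ be any extension of $u$ from $[0,T_2]$. Then $U$ agrees with $u|_{[0,T_1]}$ on $[0,T_1]$, so $U$ is also admissible in the infimum defining $\|u|_{[0,T_1]}\|_{X_{T_1}^{s,b}}$. Taking the infimum over all such $U$ gives the stated inequality.

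The only delicate point is Step 1: making ``$U(t)=u(t)$ on $[0,T]$'' meaningful for elements of the abstract completion, which is needed for closedness. The identification with $H^b(\mathbb R;H^s_D)$ via Lemma~\ref{Lem2.4} is the tool I expect to resolve it.
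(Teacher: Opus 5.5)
Your proposal is correct and follows the paper's argument. Both identify $X_T^{s,b}$ with the quotient $X^{s,b}/\mathcal N_T$ by the kernel of restriction, and both get monotonicity by noting that every extension from $[0,T_2]$ is admissible for $[0,T_1]$. Your Step 1 is more careful than the paper. The paper deduces closedness of $\mathcal N_T$ from continuity of $r_T$ into $X_T^{s,b}$, which tacitly assumes $\|\cdot\|_{X_T^{s,b}}$ is already definite. You instead prove closedness directly, by writing $\mathcal N_T$ as an intersection of kernels of bounded functionals on $X^{s,b}\cong H^b(\mathbb R;H_D^s(G))$. This also makes restriction meaningful when $b<0$.
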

	
	\begin{proof}
		Let $X=X^{s,b}(G)$. Define the linear map
		\[
		r_T:X\to X_T^{s,b},\qquad r_T(U)=U|_{[0,T]}.
		\]
		By Definition~\ref{Def2.5}, for every $U\in X$ the function $r_T(U)$ admits $U$ itself as an extension, hence
		\[
		\|r_T(U)\|_{X_T^{s,b}}\le \|U\|_{X}.
		\]
		Therefore $r_T$ is bounded. Set
		\[
		\mathcal{N}_T=\ker r_T=\{W\in X:\ W|_{[0,T]}=0\}.
		\]
		Since $r_T$ is continuous, $\mathcal{N}_T$ is a closed subspace of $X$.
		
		The map $r_T$ is surjective by the definition of $X_T^{s,b}$.
		Moreover, two elements $U,V\in X$ have the same restriction on $[0,T]$ if and only if $U-V\in\mathcal{N}_T$.
		Hence $X_T^{s,b}$ is canonically isometric to the quotient space $X/\mathcal{N}_T$ endowed with the quotient norm.
		Since $X$ is Banach and $\mathcal{N}_T$ is closed, the quotient $X/\mathcal{N}_T$ is Banach, and therefore $X_T^{s,b}$ is Banach.
		
		Finally, let $0<T_1\le T_2$ and let $u$ be defined on $[0,T_2]$.
		For every extension $U\in X$ with $U=u$ on $[0,T_2]$, the same $U$ satisfies $U=u|_{[0,T_1]}$ on $[0,T_1]$.
		Taking the infimum over all such $U$ yields
		\[
		\|u|_{[0,T_1]}\|_{X_{T_1}^{s,b}}\le \|u\|_{X_{T_2}^{s,b}}.
		\]
	\end{proof}

	\begin{lemma}\label{Lem2.7}
		If $b>\frac12$, then
		\[
		X_T^{s,b}\hookrightarrow C([0,T];H_D^s(G)).
		\]
		More precisely, there exists a constant $C_b$ such that for every $u\in X_T^{s,b}$,
		\begin{equation}\label{eq2.6}
			\sup_{t\in[0,T]}\|u(t)\|_{H_D^s(G)}\le C_b\|u\|_{X_T^{s,b}}.
		\end{equation}
	\end{lemma}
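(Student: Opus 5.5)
The plan is to prove the global bound $\sup_{t\in\mathbb{R}}\|U(t)\|_{H_D^s}\le C_b\|U\|_{X^{s,b}}$ for $U\in\mathcal{U}$, together with continuity of $t\mapsto U(t)$ in $H_D^s(G)$. We then extend both to $X^{s,b}$ by density, and pass to $X_T^{s,b}$ by taking the infimum over extensions.

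\emph{Step 1 (the bound on the core).} Let $U\in\mathcal{U}$ and set $V(t)=e^{itD}U(t)$. By Lemma~\ref{Lem2.4}, $V\in\mathcal{U}$ and
\[
\|U\|_{X^{s,b}}^2=\int_{\mathbb{R}}\int_{\mathbb{R}}\langle\lambda\rangle^{2s}\langle\tau\rangle^{2b}\|\widehat V(\tau,\lambda)\|_{\mathcal{H}_\lambda}^2\,d\tau\,d\mu_D(\lambda),
\qquad \widehat V=\mathcal{F}_t\mathcal{F}_D V.
\]
Since $e^{itD}$ is unitary and commutes with $\langle D\rangle^s$, we have $\|U(t)\|_{H_D^s}=\|V(t)\|_{H_D^s}$. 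For $\mu_D$-a.e.\ $\lambda$, Fourier inversion in $t$ (valid because $V$ is smooth and compactly supported in time) gives
\[
(\mathcal{F}_DV)(t,\lambda)=(2\pi)^{-1/2}\int e^{it\tau}\widehat V(\tau,\lambda)\,d\tau .
\]
Cauchy--Schwarz then yields
\[
\|(\mathcal{F}_DV)(t,\lambda)\|_{\mathcal{H}_\lambda}^2\le (2\pi)^{-1}\Bigl(\int\langle\tau\rangle^{-2b}d\tau\Bigr)\int\langle\tau\rangle^{2b}\|\widehat V(\tau,\lambda)\|^2_{\mathcal{H}_\lambda}\,d\tau,
\]
and the first integral is finite exactly because $b>\tfrac12$. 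Multiplying by $\langle\lambda\rangle^{2s}$, integrating in $d\mu_D$, and using \eqref{eq2.4} gives $\sup_t\|U(t)\|_{H_D^s}\le C_b\|U\|_{X^{s,b}}$ with $C_b^2=(2\pi)^{-1}\int\langle\tau\rangle^{-2b}\,d\tau$.

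\emph{Step 2 (continuity and density).} For $U\in\mathcal{U}$, $U\in C_c^\infty(\mathbb{R};\mathrm{Dom}\langle D\rangle^M)$ for some $M$. I take $M\ge s$, which I believe is allowed since $\mathcal{U}$ is a union over all $M$; in any case $\|\cdot\|_{H_D^s}\le\|\cdot\|_{H_D^{M}}$ whenever $M\ge s$. Then $t\mapsto U(t)$ is continuous into $H_D^s$, because $\langle D\rangle^s U(t)=\langle D\rangle^{s-M}\langle D\rangle^M U(t)$ and $\langle D\rangle^{s-M}$ is bounded. By Step 1, the map $U\mapsto U(\cdot)$ is a bounded linear map from $(\mathcal{U},\|\cdot\|_{X^{s,b}})$ into the closed subspace $C_b(\mathbb{R};H_D^s)$ of bounded continuous functions with the sup norm. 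It therefore extends uniquely to $X^{s,b}$, and every $U\in X^{s,b}$ is identified with a continuous $H_D^s$-valued function satisfying the same bound.

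\emph{Step 3 (restriction spaces).} Let $u\in X_T^{s,b}$ and let $U\in X^{s,b}$ be any extension. Then $u=U|_{[0,T]}\in C([0,T];H_D^s)$ and $\sup_{[0,T]}\|u(t)\|_{H_D^s}\le C_b\|U\|_{X^{s,b}}$. Taking the infimum over extensions gives \eqref{eq2.6}.

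The only delicate point is the identification in Step 2: elements of the completion must genuinely be functions of $t$, and we need the restriction in Definition~\ref{Def2.5} to be well defined. The uniform bound from Step 1 is precisely what makes this identification consistent. The analytic heart of the lemma is just the Cauchy--Schwarz estimate in $\tau$, using the integrability of $\langle\tau\rangle^{-2b}$ for $b>\tfrac12$.
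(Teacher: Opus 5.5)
Your argument is correct and is essentially the paper's proof: the same Cauchy--Schwarz bound in $\tau$ with $C_b^2=(2\pi)^{-1}\int\langle\sigma\rangle^{-2b}\,d\sigma$, followed by extension by density and the infimum over extensions. Routing it through Lemma~\ref{Lem2.4} only replaces the paper's weight $\langle\tau+\lambda\rangle^{b}$ by $\langle\tau\rangle^{b}$ after a translation in $\tau$.

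The one point to fix is continuity on the core. Membership in the union $\mathcal U$ only gives values in $\mathrm{Dom}(\langle D\rangle^M)$ for \emph{some} $M$, and these domains shrink as $M$ grows, so you cannot simply choose $M\ge s$. The paper avoids this by proving continuity directly from the Fourier representation via dominated convergence, which uses only $\|U\|_{X^{s,b}}<\infty$ and works for every $s$.
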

	
	\begin{proof}
		We first prove the global embedding
		\[
		X^{s,b}(G)\hookrightarrow C(\mathbb{R};H_D^s(G))
		\quad\text{for } b>\frac12,
		\]
		together with the estimate
		\begin{equation}\label{eq2.7}
			\sup_{t\in\mathbb{R}}\|U(t)\|_{H_D^s(G)}\le C_b\|U\|_{X^{s,b}},
			\qquad U\in X^{s,b}(G).
		\end{equation}
		
		Step 1. Estimate and continuity for $U\in\mathcal{U}$.
		Let $U\in\mathcal{U}$. For each $\lambda$ we have the Fourier inversion formula in time,
		\[
		(\mathcal{F}_D U)(t,\lambda)
		=\frac1{\sqrt{2\pi}}\int_{\mathbb{R}}e^{it\tau}\widetilde U(\tau,\lambda)\,d\tau
		\quad\text{in }\mathcal{H}_\lambda.
		\]
		By Cauchy--Schwarz in $\tau$ with weight $\langle\tau+\lambda\rangle^{b}$,
		\[
		\|(\mathcal{F}_D U)(t,\lambda)\|_{\mathcal{H}_\lambda}
		\le \frac1{\sqrt{2\pi}}
		\left(\int_{\mathbb{R}}\langle\tau+\lambda\rangle^{-2b}\,d\tau\right)^{1/2}
		\left(\int_{\mathbb{R}}\langle\tau+\lambda\rangle^{2b}\|\widetilde U(\tau,\lambda)\|_{\mathcal{H}_\lambda}^2\,d\tau\right)^{1/2}.
		\]
		Since $b>\frac12$, the constant
		\[
		K_b=\int_{\mathbb{R}}\langle\sigma\rangle^{-2b}\,d\sigma<\infty,
		\qquad
		\int_{\mathbb{R}}\langle\tau+\lambda\rangle^{-2b}\,d\tau=K_b,
		\]
		hence by \eqref{eq2.4},
		\[
		\|U(t)\|_{H_D^s(G)}^2
		=\int_{\mathbb{R}}\langle\lambda\rangle^{2s}\|(\mathcal{F}_D U)(t,\lambda)\|_{\mathcal{H}_\lambda}^2\,d\mu_D(\lambda)
		\le \frac{K_b}{2\pi}\|U\|_{X^{s,b}}^2,
		\]
		which gives \eqref{eq2.7} for $U\in\mathcal{U}$ with $C_b=\sqrt{\frac{K_b}{2\pi}}$.
		
		To prove continuity for $U\in\mathcal{U}$, let $t_n\to t$. For $\mu_D$-a.e.\ $\lambda$,
		\[
		(\mathcal{F}_D U)(t_n,\lambda)-(\mathcal{F}_D U)(t,\lambda)
		=\frac1{\sqrt{2\pi}}\int_{\mathbb{R}}\bigl(e^{it_n\tau}-e^{it\tau}\bigr)\widetilde U(\tau,\lambda)\,d\tau.
		\]
		By Cauchy--Schwarz with the same weight,
		\[
		\|(\mathcal{F}_D U)(t_n,\lambda)-(\mathcal{F}_D U)(t,\lambda)\|_{\mathcal{H}_\lambda}
		\le \frac1{\sqrt{2\pi}}A_n(\lambda)\,B(\lambda),
		\]
		where
		\[
		A_n(\lambda)
		=\left(\int_{\mathbb{R}}|e^{it_n\tau}-e^{it\tau}|^2\langle\tau+\lambda\rangle^{-2b}\,d\tau\right)^{1/2},
		\qquad
		B(\lambda)
		=\left(\int_{\mathbb{R}}\langle\tau+\lambda\rangle^{2b}\|\widetilde U(\tau,\lambda)\|_{\mathcal{H}_\lambda}^2\,d\tau\right)^{1/2}.
		\]
		Since $|e^{it_n\tau}-e^{it\tau}|^2\to 0$ pointwise in $\tau$ and is bounded by $4$, dominated convergence yields
		$A_n(\lambda)\to 0$ for each fixed $\lambda$, and $A_n(\lambda)^2\le 4K_b$ for all $n,\lambda$.
		Therefore,
		\[
		\begin{aligned}
			\|U(t_n)-U(t)\|_{H_D^s(G)}^2
			&=\int_{\mathbb{R}}\langle\lambda\rangle^{2s}
			\|(\mathcal{F}_D U)(t_n,\lambda)-(\mathcal{F}_D U)(t,\lambda)\|_{\mathcal{H}_\lambda}^2\,d\mu_D(\lambda)\\
			&\le \frac1{2\pi}\int_{\mathbb{R}}\langle\lambda\rangle^{2s}A_n(\lambda)^2B(\lambda)^2\,d\mu_D(\lambda).    
		\end{aligned}
		\]
		Since $U\in X^{s,b}$, one has
		\[
		\int_{\mathbb{R}}\langle\lambda\rangle^{2s}B(\lambda)^2\,d\mu_D(\lambda)=\|U\|_{X^{s,b}}^2<\infty.
		\]
		Hence $\langle\lambda\rangle^{2s}A_n(\lambda)^2B(\lambda)^2$ is dominated by $4K_b\,\langle\lambda\rangle^{2s}B(\lambda)^2\in L^1(d\mu_D)$,
		and dominated convergence in $\lambda$ implies $\|U(t_n)-U(t)\|_{H_D^s(G)}\to 0$.
		
		Step 2. Extension to all $U\in X^{s,b}(G)$.
		Let $U\in X^{s,b}(G)$ and choose $U_k\in\mathcal{U}$ with $U_k\to U$ in $X^{s,b}$.
		By \eqref{eq2.7}, $(U_k)$ is Cauchy in $C(\mathbb{R};H_D^s(G))$, hence converges to some
		$V\in C(\mathbb{R};H_D^s(G))$. We redefine $U$ as this continuous representative $V$.
		Then \eqref{eq2.7} follows by passing to the limit.
		
		Step 3. Restriction to $[0,T]$.
		Let $u\in X_T^{s,b}$. By Definition~\ref{Def2.5}, for every $\varepsilon>0$ there exists an extension
		$U\in X^{s,b}(G)$ such that $U(t)=u(t)$ on $[0,T]$ and
		\[
		\|U\|_{X^{s,b}}\le \|u\|_{X_T^{s,b}}+\varepsilon.
		\]
		By Step 2, $U\in C(\mathbb{R};H_D^s(G))$, hence $u\in C([0,T];H_D^s(G))$.
		Moreover, using \eqref{eq2.7},
		\[
		\sup_{t\in[0,T]}\|u(t)\|_{H_D^s(G)}
		\le \sup_{t\in\mathbb{R}}\|U(t)\|_{H_D^s(G)}
		\le C_b\|U\|_{X^{s,b}}
		\le C_b\bigl(\|u\|_{X_T^{s,b}}+\varepsilon\bigr).
		\]
		Letting $\varepsilon\to 0$ gives \eqref{eq2.6}.
	\end{proof}
	
	\section{Linear and Nonlinear Estimates}\label{Sec3}
	
	Throughout this section, $X^{s,b}$ and $X_T^{s,b}$ are the Bourgain spaces associated with $D$
	as in Definitions~\ref{Def2.3}--\ref{Def2.5}.
	We fix a nonnegative cutoff $\eta\in C_c^\infty(\mathbb{R})$ such that
	\begin{equation}\label{eq3.1}
		\eta(t)=1\ \text{for }|t|\le 1,
		\qquad
		\mathrm{supp}\,\eta\subset\{t:|t|\le 2\}.
	\end{equation}
	For $T\in(0,1]$ we set $\eta_T(t)=\eta(t/T)$.
	
	\begin{lemma}\label{Lem3.1}
		Let $\eta\in C_c^\infty(\mathbb{R})$.
		For any $s\in\mathbb{R}$ and any $b\in\mathbb{R}$,
		\begin{equation}\label{eq3.2}
			\|\eta(t)e^{-itD}\psi_0\|_{X^{s,b}}\le C_{\eta,b}\|\psi_0\|_{H_D^s(G)}.
		\end{equation}
		Consequently, for the cutoff $\eta$ fixed in \eqref{eq3.1} and any $T\in(0,1]$,
		\begin{equation}\label{eq3.3}
			\|e^{-itD}\psi_0\|_{X_T^{s,b}}\le C_b\|\psi_0\|_{H_D^s(G)},
			\qquad C_b=C_{\eta,b}.
		\end{equation}
	\end{lemma}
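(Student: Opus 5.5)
The plan is to compute the norm exactly using Lemma~\ref{Lem2.4}, which turns the $X^{s,b}$ norm into a weighted norm of the time Fourier transform of the interaction representation. First assume $\psi_0\in\mathrm{Dom}(\langle D\rangle^M)$ for some $M$, so that $u(t)=\eta(t)e^{-itD}\psi_0$ lies in $\mathcal{U}$; here we use that $e^{-itD}$ commutes with $\langle D\rangle^M$ and that $t\mapsto e^{-itD}\psi_0$ is smooth into $\mathrm{Dom}(\langle D\rangle^{M-k})$ on compact sets. If the smoothness bookkeeping is awkward, we can instead verify membership directly with the spectral formula below. Then $v(t)=e^{itD}u(t)=\eta(t)\psi_0$, so $(\mathcal{F}_t\mathcal{F}_D v)(\tau,\lambda)=\widehat\eta(\tau)\,(\mathcal{F}_D\psi_0)(\lambda)$. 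Lemma~\ref{Lem2.4} and \eqref{eq2.4} then give
\[
\|\eta(t)e^{-itD}\psi_0\|_{X^{s,b}}^2
=\int_{\mathbb{R}}\langle\tau\rangle^{2b}|\widehat\eta(\tau)|^2\,d\tau\cdot
\int_{\mathbb{R}}\langle\lambda\rangle^{2s}\|(\mathcal{F}_D\psi_0)(\lambda)\|_{\mathcal{H}_\lambda}^2\,d\mu_D(\lambda)
=\|\eta\|_{H^b(\mathbb{R})}^2\,\|\psi_0\|_{H_D^s(G)}^2 .
\]
This identity is \eqref{eq3.2}, with $C_{\eta,b}=\|\eta\|_{H^b(\mathbb{R})}$, which is finite for every $b\in\mathbb{R}$ because $\widehat\eta$ is Schwartz.

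Next we extend to general $\psi_0\in H_D^s(G)$ by density. The set $\bigcup_M\mathrm{Dom}(\langle D\rangle^M)$ is dense in $H_D^s(G)$; for instance, spectral truncations $E_D([-R,R])\psi_0$ converge in $H_D^s$, and for $s<0$ we use the completion definition. The map $\psi_0\mapsto\eta e^{-itD}\psi_0$ is linear and isometric up to the constant $C_{\eta,b}$, so it extends uniquely to a bounded map $H_D^s\to X^{s,b}$. We still need the extension to coincide with the function $\eta(t)e^{-itD}\psi_0$. When $b>\tfrac12$, Lemma~\ref{Lem2.7} gives pointwise-in-$t$ convergence in $H_D^s$, and $e^{-itD}$ acts continuously on $H_D^s$. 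For general $b$, we identify the extension through its time–spectral transform, which converges in the weighted $L^2$ space. This identification is the only mildly delicate point, and it is purely notational.

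Finally, for \eqref{eq3.3} with $T\in(0,1]$, we note that $\eta(t)=1$ on $[0,T]\subset[-1,1]$ by \eqref{eq3.1}. Hence $U=\eta(t)e^{-itD}\psi_0$ is an admissible extension of $e^{-itD}\psi_0|_{[0,T]}$ in Definition~\ref{Def2.5}. Therefore $\|e^{-itD}\psi_0\|_{X_T^{s,b}}\le\|U\|_{X^{s,b}}\le C_{\eta,b}\|\psi_0\|_{H_D^s(G)}$, and the constant is uniform in $T$ because we use the unscaled cutoff $\eta$ rather than $\eta_T$.
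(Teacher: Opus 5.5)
Your proof is correct and is essentially the paper's argument. Invoking Lemma~\ref{Lem2.4} to reduce to $v(t)=\eta(t)\psi_0$ is the same change of variables $\sigma=\tau+\lambda$ that the paper applies directly to $\widetilde U(\tau,\lambda)=\widehat\eta(\tau+\lambda)(\mathcal F_D\psi_0)(\lambda)$. It yields the same constant $C_{\eta,b}^2=\int\langle\sigma\rangle^{2b}|\widehat\eta(\sigma)|^2\,d\sigma$, and your restriction argument for \eqref{eq3.3} matches the paper's.

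One small repair to the density step, which the paper does not address: start directly from spectrally truncated data $E_D([-R,R])\psi_0$. These lie in every $\mathrm{Dom}(\langle D\rangle^M)$, so $\eta(t)e^{-itD}\psi_0$ genuinely belongs to $\mathcal U$. Membership in a single $\mathrm{Dom}(\langle D\rangle^M)$ does not by itself give smoothness in $t$ with values in a fixed $\mathrm{Dom}(\langle D\rangle^{M'})$.
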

	
	\begin{proof}
		Let $\psi_0\in H_D^s(G)$ and set $U(t)=\eta(t)e^{-itD}\psi_0$.
		Applying the spectral transform $\mathcal{F}_D$ in the space variable gives
		\[
		(\mathcal{F}_D U)(t,\lambda)=\eta(t)e^{-it\lambda}(\mathcal{F}_D\psi_0)(\lambda)
		\quad\text{in }\mathcal{H}_\lambda.
		\]
		Taking the Fourier transform in time yields
		\[
		\widetilde U(\tau,\lambda)
		=(\mathcal{F}_t\mathcal{F}_D U)(\tau,\lambda)
		=\widehat{\eta}(\tau+\lambda)\,(\mathcal{F}_D\psi_0)(\lambda),
		\]
		where $\widehat{\eta}=\mathcal{F}_t\eta$.
		Hence, by the definition of the $X^{s,b}$ norm,
		\begin{align*}
			\|U\|_{X^{s,b}}^2
			&=\int_{\mathbb{R}}\int_{\mathbb{R}}
			\langle\lambda\rangle^{2s}\langle\tau+\lambda\rangle^{2b}
			|\widehat{\eta}(\tau+\lambda)|^2
			\|(\mathcal{F}_D\psi_0)(\lambda)\|_{\mathcal{H}_\lambda}^2
			\,d\tau\,d\mu_D(\lambda)\\
			&=\left(\int_{\mathbb{R}}\langle\sigma\rangle^{2b}|\widehat{\eta}(\sigma)|^2\,d\sigma\right)
			\int_{\mathbb{R}}\langle\lambda\rangle^{2s}\|(\mathcal{F}_D\psi_0)(\lambda)\|_{\mathcal{H}_\lambda}^2\,d\mu_D(\lambda),
		\end{align*}
		where we used the change of variables $\sigma=\tau+\lambda$.
		By \eqref{eq2.4}, the second integral equals $\|\psi_0\|_{H_D^s(G)}^2$.
		This proves \eqref{eq3.2} with
		\[
		C_{\eta,b}^2=\int_{\mathbb{R}}\langle\sigma\rangle^{2b}|\widehat{\eta}(\sigma)|^2\,d\sigma.
		\]
		
		To prove \eqref{eq3.3}, assume that $\eta$ satisfies \eqref{eq3.1}.
		For $T\in(0,1]$, the function $U(t)=\eta(t)e^{-itD}\psi_0$ coincides with $e^{-itD}\psi_0$ on $[0,T]$,
		hence Definition~\ref{Def2.5} gives
		\[
		\|e^{-itD}\psi_0\|_{X_T^{s,b}}\le \|U\|_{X^{s,b}}
		\le C_{\eta,b}\|\psi_0\|_{H_D^s(G)}.
		\]
		Setting $C_b=C_{\eta,b}$ yields \eqref{eq3.3}.
	\end{proof}
	
	For $\lambda\in\mathbb{R}$ and $\theta\in\mathbb{R}$, we define the shifted Sobolev space
	$H_\lambda^\theta(\mathbb{R};\mathcal H)$ by
	\[
	\|g\|_{H_\lambda^\theta(\mathbb{R};\mathcal H)}^2
	=\int_{\mathbb{R}}\langle\tau+\lambda\rangle^{2\theta}\|\widehat g(\tau)\|_{\mathcal H}^2\,d\tau,
	\qquad
	\widehat g=\mathcal{F}_t g.
	\]
	
	\begin{lemma}\label{Lem3.2}
		Let $b\in\left(\frac12,1\right)$ and $b'\in\left(0,\frac12\right)$.
		Let $\eta$ satisfy \eqref{eq3.1} and let $T\in(0,1]$.
		For any $\lambda\in\mathbb{R}$ and any Hilbert space $\mathcal H$, define
		\[
		\mathcal T_\lambda f(t)
		=\eta_T(t)\int_0^t e^{-i(t-\tau)\lambda}f(\tau)\,d\tau,
		\qquad
		\eta_T(t)=\eta(t/T),
		\]
		for $f\in H_\lambda^{b'-1}(\mathbb{R};\mathcal H)$.
		Then there exists $C$ depending only on $b,b'$ and $\eta$ such that
		\begin{equation}\label{eq3.4}
			\|\mathcal T_\lambda f\|_{H_\lambda^{b}(\mathbb{R};\mathcal H)}
			\le C\,T^{\,1-b+b'}\|f\|_{H_\lambda^{b'-1}(\mathbb{R};\mathcal H)}.
		\end{equation}
		The constant $C$ is independent of $\lambda$ and $T$.
	\end{lemma}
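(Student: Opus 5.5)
The plan is to remove the phase by conjugation, which reduces \eqref{eq3.4} to the classical one-dimensional Duhamel estimate of Kenig--Ponce--Vega / Ginibre type.

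\emph{Step 1 (reduction to $\lambda=0$).} Set $g(t)=e^{it\lambda}f(t)$. Then $\widehat g(\tau)=\widehat f(\tau-\lambda)$, and hence
\[
\|g\|_{H_0^{\theta}}=\|f\|_{H_\lambda^{\theta}} .
\]
Moreover,
\[
e^{it\lambda}\mathcal T_\lambda f(t)=\eta_T(t)\int_0^t g(\tau)\,d\tau=:\mathcal T_0 g(t),
\]
and the same shift gives $\|\mathcal T_\lambda f\|_{H^b_\lambda}=\|\mathcal T_0 g\|_{H^b_0}$. It therefore suffices to prove
\[
\|\eta_T\textstyle\int_0^t g\|_{H^b}\le C\,T^{1-b+b'}\|g\|_{H^{b'-1}}
\]
for $\mathcal H$-valued $g$, with $C$ independent of $T$. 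Everything is scalar Fourier analysis applied pointwise in $\mathcal H$ via Plancherel, so it suffices to argue as in the scalar case with norms in $\mathcal H$.

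\emph{Step 2 (frequency splitting).} Write
\[
\int_0^t g=\frac1{\sqrt{2\pi}}\int \frac{e^{it\tau}-1}{i\tau}\,\widehat g(\tau)\,d\tau .
\]
Split $\widehat g=\widehat g\,\mathbf 1_{|\tau|\le 1/T}+\widehat g\,\mathbf 1_{|\tau|>1/T}$.

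For the low part, expand
\[
e^{it\tau}-1=\sum_{k\ge1}\frac{(it\tau)^k}{k!}.
\]
This gives
\[
\eta_T(t)\sum_k \frac{t^k\eta_T(t)}{k!}\,i^{k-1}\!\int_{|\tau|\le1/T}\tau^{k-1}\widehat g\,d\tau .
\]
The $H^b$ norm of $t^k\eta_T(t)$ is bounded by $C\,k^{C}\,T^{k+1/2-b}$. This follows from scaling: $\|\phi(\cdot/T)\|_{H^b}\lesssim T^{1/2-b}$ for $T\le1$, $b\ge0$. By Cauchy--Schwarz,
\[
\int_{|\tau|\le1/T}|\tau|^{k-1}\|\widehat g\|\lesssim T^{-(k-1)}\Bigl(\int_{|\tau|\le 1/T}\langle\tau\rangle^{2(1-b')}\,d\tau\Bigr)^{1/2}\|g\|_{H^{b'-1}}\lesssim T^{-k+1/2+b'-1}\cdot T^{0}\cdots
\]
Carefully, the bound is $T^{-(k-1)}\,T^{b'-3/2}\|g\|_{H^{b'-1}}$. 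Multiplying by $T^{k+1/2-b}$ gives $T^{1-b+b'}$, and the $1/k!$ makes the series summable.

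For the high part, split $\frac{e^{it\tau}-1}{i\tau}$ into two terms. The first is $\eta_T(t)\,\mathcal F^{-1}\bigl(\widehat g\,\mathbf 1_{|\tau|>1/T}/(i\tau)\bigr)$. The second is $-\eta_T(t)\int_{|\tau|>1/T}\widehat g/(i\tau)$, a constant times $\eta_T$.

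The constant term is handled as before. By Cauchy--Schwarz,
\[
\int_{|\tau|>1/T}|\tau|^{-1}\|\widehat g\|\lesssim\Bigl(\int_{|\tau|>1/T}|\tau|^{-2b'}\,d\tau\Bigr)^{1/2}\|g\|_{H^{b'-1}}\sim T^{b'-1/2}\|g\|_{H^{b'-1}},
\]
which is finite since $b'<1/2$. Combined with $\|\eta_T\|_{H^b}\lesssim T^{1/2-b}$, this gives $T^{1-b+b'}$.

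\emph{Step 3 (main obstacle).} The remaining term is $\eta_T\,w$, where $\widehat w=\widehat g\,\mathbf 1_{|\tau|>1/T}/(i\tau)$. We need
\[
\|\eta_T w\|_{H^b}\lesssim T^{1-b+b'}\|g\|_{H^{b'-1}},
\]
using only that $\widehat w$ is supported at frequencies $\gtrsim 1/T$. Here $\|w\|_{H^{b'}}\lesssim\|g\|_{H^{b'-1}}$, but we must gain regularity $b-b'$ while losing the power of $T$.

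I would not multiply directly by $\eta_T$. Instead, I would use the known cutoff lemma: for $-1/2<b'\le b<1/2$... However, $b>1/2$ here, so the standard route is different. I would instead go back and prove the estimate via the known sharp form
\[
\|\eta_T\textstyle\int_0^t g\|_{H^b}\lesssim T^{1-b+b'}\|g\|_{H^{b'-1}} ,
\]
proved by first establishing the $T=1$ case, $\|\eta\int_0^t g\|_{H^b}\lesssim\|g\|_{H^{b-1}}$, and then
\[
\|\eta_T F\|_{H^{b-1}}\lesssim T^{b'-b+1}\ \text{-type}\quad\text{via}\quad \|\eta_T h\|_{H^{\beta}}\lesssim T^{\beta'-\beta}\|h\|_{H^{\beta'}}
\]
for $-1/2<\beta\le\beta'<1/2$, applied with $\beta=b-1$, $\beta'=b'-1$... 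Those exponents lie in $(-1/2,0)$, so it is legitimate.

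Concretely, since $\eta_T=\eta_T\eta_{2T}'$-type localization holds (with $\tilde\eta=1$ on $\operatorname{supp}\eta$), we have $\mathcal T_0 g=\mathcal T_0(\tilde\eta_{T}g)$ on the support, and the argument becomes:
\begin{itemize}
\item \emph{(a)} the $T$-uniform bound $\|\eta_T\int_0^t h\|_{H^b}\lesssim T^{1/2-b}\bigl(\|h\|_{H^{b-1}}+\ldots\bigr)$, obtained by the Step~2 splitting with $b'$ replaced by $b-1$;
\item \emph{(b)} the gain $\|\tilde\eta_T g\|_{H^{b-1}}\lesssim T^{b'-b+1}\cdots$ from the localization lemma, which is proved by duality and the fractional Leibniz/Hardy inequality for exponents in $(0,1/2)$.
\end{itemize}
Verifying (b) with constants uniform in $T$ is the step I expect to require the most care.
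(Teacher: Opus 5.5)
\textbf{There is a genuine gap, and it cannot be closed.} Your Step~1 is exactly the paper's reduction: conjugating by $e^{it\lambda}$ turns \eqref{eq3.4} into $\|\eta_T\int_0^t F\|_{H^b}\le CT^{1-b+b'}\|F\|_{H^{b'-1}}$. The paper then simply cites Tao's Proposition~2.12 for this. But the reduced inequality is false whenever $b>b'$, and hence throughout the stated range $b'<\frac12<b$. Integration in time gains exactly one derivative, while $H^{b'-1}\to H^{b}$ would require a gain of $1+b-b'>1$.

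Concretely, take $\mathcal H=\mathbb C$ and $\lambda=0$. Fix $\chi\in C_c^\infty((0,T))$ with $\chi\not\equiv0$, and set $G_N(t)=e^{iNt}\chi(t)$, $F_N=G_N'$. Since $G_N(0)=0$ and $\eta_T\equiv1$ on $\operatorname{supp}\chi$, we get $\mathcal T_0F_N=G_N$. Hence
\[
\|\mathcal T_0F_N\|_{H^b}=\|G_N\|_{H^b}\simeq N^{b},
\qquad
\|F_N\|_{H^{b'-1}}\le\|G_N\|_{H^{b'}}\simeq N^{b'},
\]
and the ratio grows like $N^{b-b'}$. The cited Proposition~2.12 has source space $H^{b-1}$ with the same $b$, so it does not cover this case.

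Your own computations show the same thing once they are corrected.
\begin{itemize}
\item In the low-frequency part, $T^{-(k-1)}\cdot T^{b'-3/2}\cdot T^{k+1/2-b}=T^{b'-b}$, not $T^{1-b+b'}$. You dropped a factor $T^{-1}$, and $T^{b'-b}\to\infty$ as $T\to0$.
\item In the high-frequency constant term, $\int_{|\tau|>1/T}|\tau|^{-2b'}\,d\tau$ is finite only for $b'>\frac12$. For $b'<\frac12$ it diverges, which is precisely the divergence the paper's remark warns about. So the bound $T^{b'-1/2}$ is not available.
\item In Step~3 you correctly identify that you must gain $b-b'$ derivatives. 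However, the localization lemma $\|\eta_Th\|_{H^\beta}\lesssim T^{\beta'-\beta}\|h\|_{H^{\beta'}}$ requires $-\frac12<\beta\le\beta'<\frac12$. Here $\beta'=b'-1<-\frac12$ and $\beta=b-1>\beta'$, so the lemma is applied outside its range and in the wrong direction: a time cutoff cannot raise regularity.
\end{itemize}

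What is true (Ginibre--Tsutsumi--Velo) is $\|\eta_T\int_0^tF\|_{H^b}\lesssim T^{1-b+\beta'}\|F\|_{H^{\beta'}}$ for $-\frac12<\beta'\le0\le b\le\beta'+1$. In the paper's notation this means $b'\in(\frac12,1)$ and $b\le b'$, with factor $T^{b'-b}$. Your Step~2 splitting proves exactly this corrected estimate: then $b-1\le\beta'$, so the leftover term $\eta_Tw$ needs no gain. Taking $\beta'=0$, with factor $T^{1-b}$, already suffices for Lemma~\ref{Lem3.7}, because the zero extension gives $\|F\|_{X_T^{s,0}}\le\|F\|_{L^2(0,T;H_D^s)}$.
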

	
	\begin{proof}
		Multiplication by $e^{it\lambda}$ reduces the estimate to the case $\lambda=0$. Indeed, with
		$g(t)=e^{it\lambda}\mathcal T_\lambda f(t)$ and $F(t)=e^{it\lambda}f(t)$ one has
		\[
		g(t)=\eta_T(t)\int_0^t F(\tau)\,d\tau,
		\qquad
		\|\mathcal T_\lambda f\|_{H_\lambda^b}=\|g\|_{H^b},
		\qquad
		\|f\|_{H_\lambda^{b'-1}}=\|F\|_{H^{b'-1}}.
		\]
		It remains to use the standard time-localized Bourgain Duhamel estimate
		\begin{equation}\label{eq3.5}
			\left\|\eta_T(t)\int_0^t F(\tau)\,d\tau\right\|_{H^b(\mathbb R;\mathcal H)}
			\le C T^{1-b+b'}\|F\|_{H^{b'-1}(\mathbb R;\mathcal H)},
		\end{equation}
		valid for $b\in(1/2,1)$, $b'\in(0,1/2)$ and $0<T\le1$; see, for example, Tao \cite[Proposition~2.12]{TaoBilinearDispersive}. For completeness we recall the scalar Fourier-side form of the argument. If $F$ is Schwartz, then
		\[
		\mathcal F_t\left(\eta_T(t)\int_0^t F(\tau)\,d\tau\right)(\tau)
		=\frac1{\sqrt{2\pi}}\int_{\mathbb R}
		\frac{\widehat\eta_T(\tau-\sigma)-\widehat\eta_T(\tau)}{i\sigma}\,\widehat F(\sigma)\,d\sigma,
		\]
		where the difference quotient is understood at $\sigma=0$ by continuity. The proof of
		\eqref{eq3.5} estimates this multiplier directly in $L^2$ after the usual low/high
		$\sigma$ decomposition. The high-frequency part is controlled by the cancellation in
		$\widehat\eta_T(\tau-\sigma)-\widehat\eta_T(\tau)$; one does not estimate the two terms separately by
		$\int_{|\sigma|>1}\langle\sigma\rangle^{-2b'}d\sigma$, which would diverge for $b'<1/2$. The Hilbert-valued version follows from the scalar estimate by expanding with respect to an orthonormal basis of $\mathcal H$ and using Plancherel. This gives \eqref{eq3.5}, and hence \eqref{eq3.4}.
	\end{proof}
	
	\begin{lemma}\label{Lem3.3}
		Let $b\in\left(\frac12,1\right)$ and let $b'\in\left(0,\frac12\right)$.
		For $T\in(0,1]$ and any $F\in X_T^{s,b'-1}$,
		\begin{equation}\label{eq3.6}
			\left\|\int_0^t e^{-i(t-\tau)D}F(\tau)\,d\tau\right\|_{X_T^{s,b}}
			\le C\,T^{\,1-b+b'}\|F\|_{X_T^{s,b'-1}},
		\end{equation}
		where $C$ depends only on $b,b'$ and $\eta$ in \eqref{eq3.1}.
	\end{lemma}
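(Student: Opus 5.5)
The plan is to reduce \eqref{eq3.6} to the scalar-in-$\lambda$ estimate of Lemma~\ref{Lem3.2}, applied fibrewise in the spectral representation of $D$, and then pass from global to restricted norms.

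\emph{Step 1: extension and cutoff.} Let $F\in X_T^{s,b'-1}$ and $\varepsilon>0$. Choose an extension $\widetilde F\in X^{s,b'-1}$ with $\widetilde F=F$ on $[0,T]$ and
\[
\|\widetilde F\|_{X^{s,b'-1}}\le \|F\|_{X_T^{s,b'-1}}+\varepsilon .
\]
By density, first take $\widetilde F\in\mathcal U$ and extend afterwards. Set
\[
U(t)=\eta_T(t)\int_0^t e^{-i(t-\tau)D}\widetilde F(\tau)\,d\tau .
\]
Since $\eta_T=1$ on $[0,T]$ and the Duhamel integral on $[0,T]$ only involves $\widetilde F(\tau)$ with $\tau\in[0,t]\subset[0,T]$, $U$ coincides with $\int_0^t e^{-i(t-\tau)D}F(\tau)\,d\tau$ on $[0,T]$. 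Hence the left side of \eqref{eq3.6} is at most $\|U\|_{X^{s,b}}$.

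\emph{Step 2: fibrewise reduction.} Apply $\mathcal F_D$ in space. By functional calculus, for $\mu_D$-a.e.\ $\lambda$,
\[
(\mathcal F_D U)(t,\lambda)=\mathcal T_\lambda f_\lambda(t),
\qquad
f_\lambda(t)=(\mathcal F_D\widetilde F)(t,\lambda)\in\mathcal H_\lambda .
\]
This commutation of $\mathcal F_D$ with the time integral is harmless for $\widetilde F\in\mathcal U$, using Bochner integrals. Then
\[
\|U\|_{X^{s,b}}^2=\int_{\mathbb R}\langle\lambda\rangle^{2s}\,\|\mathcal T_\lambda f_\lambda\|_{H^b_\lambda(\mathbb R;\mathcal H_\lambda)}^2\,d\mu_D(\lambda),
\]
and similarly
\[
\|\widetilde F\|_{X^{s,b'-1}}^2=\int_{\mathbb R}\langle\lambda\rangle^{2s}\,\|f_\lambda\|^2_{H^{b'-1}_\lambda}\,d\mu_D(\lambda).
\]
Lemma~\ref{Lem3.2} gives, with $C$ uniform in $\lambda$ and $T$,
\[
\|\mathcal T_\lambda f_\lambda\|_{H^b_\lambda}\le C\,T^{1-b+b'}\|f_\lambda\|_{H^{b'-1}_\lambda}.
\]
Squaring and integrating against $\langle\lambda\rangle^{2s}d\mu_D$ yields
\[
\|U\|_{X^{s,b}}\le C\,T^{1-b+b'}\|\widetilde F\|_{X^{s,b'-1}}.
\]

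\emph{Step 3: conclusion and density.} Combining the two steps gives \eqref{eq3.6} up to an additive $C\,T^{1-b+b'}\varepsilon$, and we then let $\varepsilon\to0$. For general $\widetilde F$ in the completion, we argue by density of $\mathcal U$. The map $\widetilde F\mapsto U$ is linear and, by Step 2, bounded from $\mathcal U\subset X^{s,b'-1}$ into $X^{s,b}$, so it extends uniquely. The extension agrees with the Duhamel integral, understood in $C([0,T];H_D^{s-1})$ or distributionally, because the Duhamel formula is continuous in weaker topologies.

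The main obstacle is this last point: $X^{s,b'-1}$ has negative time regularity, so the Duhamel integral of a general element is only defined by that extension. I would handle it by defining the Duhamel operator on $X^{s,b'-1}$ as the continuous extension from $\mathcal U$. I would then check that different extensions $\widetilde F$ of the same $F$ produce the same restriction to $[0,T]$. To do this, apply the bound to the difference: it vanishes on $[0,T]$, and it can be approximated within $\mathcal U$ by functions vanishing on slightly smaller intervals after a time cutoff argument. Everything else follows directly from Lemma~\ref{Lem3.2} and the measurability of the fibre decomposition.
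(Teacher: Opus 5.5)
Your Steps 1--2 follow the paper's proof essentially line by line: extension within $\varepsilon$, cutoff $\eta_T$, fibrewise use of Lemma~\ref{Lem3.2}, then integration against $\langle\lambda\rangle^{2s}d\mu_D$. The route is therefore not the problem. The problem is that the estimate you import in Step 2 is false for $b'\in(0,\frac12)$, and so is \eqref{eq3.6} itself; the paper's proof has the same defect.

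Since $b'-1<-\frac12$, the weight $\langle\tau+\lambda\rangle^{2(b'-1)}$ is integrable in $\tau$. So forcings concentrated in time stay bounded in $X^{s,b'-1}$, while their Duhamel integrals develop a jump. Concretely, fix
\begin{itemize}
\item $0\neq\phi\in\bigcap_M\mathrm{Dom}(\langle D\rangle^M)$;
\item $0\le\chi\in C_c^\infty((-1,1))$ with $\int\chi=1$, and $\Phi(x)=\int_{-\infty}^x\chi$;
\item for $n>2/T$, $\chi_n(t)=n\chi(n(t-T/2))$ and $F_n(t)=\chi_n(t)e^{-itD}\phi$.
\end{itemize}
As in Lemma~\ref{Lem3.1}, $\widetilde{F_n}(\tau,\lambda)=\widehat{\chi_n}(\tau+\lambda)(\mathcal F_D\phi)(\lambda)$ with $|\widehat{\chi_n}|\le(2\pi)^{-1/2}$. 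Hence, uniformly in $n$,
\[
\|F_n\|_{X_T^{s,b'-1}}^2\le\|F_n\|_{X^{s,b'-1}}^2\le\frac{1}{2\pi}\int_{\mathbb R}\langle\sigma\rangle^{2(b'-1)}\,d\sigma\,\|\phi\|_{H_D^s}^2 .
\]
On $[0,T]$, however, the Duhamel integral equals $u_n(t)=\Phi(n(t-T/2))\,e^{-itD}\phi$. Let $U\in X^{s,b}$ be any extension of $u_n$ and set $V(t)=e^{itD}U(t)$. Lemma~\ref{Lem2.4}, together with the Cauchy--Schwarz argument of Lemma~\ref{Lem2.7} (now using $|e^{it_1\tau}-e^{it_2\tau}|\le\min(2,|t_1-t_2||\tau|)$), gives
\[
\|V(t_1)-V(t_2)\|_{H_D^s}\le C_b|t_1-t_2|^{b-1/2}\|U\|_{X^{s,b}} .
\]
At $t_\pm=T/2\pm1/n$ one has $V(t_+)-V(t_-)=\phi$. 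Therefore $\|u_n\|_{X_T^{s,b}}\ge c_b\,n^{b-1/2}\|\phi\|_{H_D^s}\to\infty$. The same example with $\mathcal H=\mathbb C$ and fixed $\lambda$ disproves \eqref{eq3.4}.

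The obstacle you flagged in Step 3 is a symptom of this. In the stated range the Duhamel map is unbounded on the core $\mathcal U$ for these norms, so there is nothing to extend. Extension-independence also genuinely fails. The element $\delta_0(t)\phi$ lies in $X^{s,b'-1}$ and vanishes on $(0,T)$. Yet approximating it by such $\chi_n$ supported in $(0,1/n)$ or in $(-1/n,0)$ yields Duhamel integrals converging to $e^{-itD}\phi$ or to $0$ on $(0,T]$, respectively.

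The standard Duhamel estimate requires the time exponent on $F$ to exceed $-\frac12$. For instance, it holds for $F\in X_T^{s,-b'}$ with $0\le b'<\frac12<b$, $b+b'\le1$, and gain $T^{1-b-b'}$. In the other common normalization, it holds for $F\in X_T^{s,\beta-1}$ with $\frac12<b\le\beta\le1$ and gain $T^{\beta-b}$.

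With that correction:
\begin{itemize}
\item Your Steps 1--2 go through unchanged.
\item Step 3 is settled: for $\gamma<\frac12$ the indicator $\mathbf 1_{[0,t]}$ is an $H^\gamma(\mathbb R)$-limit of functions in $C_c^\infty((0,t))$. So $\int_0^t$ becomes a duality pairing that only sees $F$ on $(0,t)$.
\item The later application in Lemma~\ref{Lem3.7} survives, since it only uses $\|F\|_{X_T^{s,\beta}}\le\|F\|_{L^2(0,T;H_D^s)}$ for $\beta\le0$.
\end{itemize}
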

	
	\begin{proof}
		Let $F\in X_T^{s,b'-1}$ and fix $\varepsilon>0$.
		By Definition~\ref{Def2.5}, there exists an extension $\widetilde F\in X^{s,b'-1}$ such that
		$\widetilde F(t)=F(t)$ for a.e.\ $t\in[0,T]$ and
		\[
		\|\widetilde F\|_{X^{s,b'-1}}\le \|F\|_{X_T^{s,b'-1}}+\varepsilon.
		\]
		Set
		\[
		\Psi(t)=\eta_T(t)\int_0^t e^{-i(t-\tau)D}\widetilde F(\tau)\,d\tau.
		\]
		Since $T\in(0,1]$ and $\eta(t)=1$ for $|t|\le 1$, we have $\eta_T(t)=\eta(t/T)=1$ for all $t\in[0,T]$.
		Therefore, for a.e.\ $t\in[0,T]$,
		\[
		\Psi(t)=\int_0^t e^{-i(t-\tau)D}F(\tau)\,d\tau.
		\]
		Hence, by Definition~\ref{Def2.5},
		\[
		\left\|\int_0^t e^{-i(t-\tau)D}F(\tau)\,d\tau\right\|_{X_T^{s,b}}
		\le \|\Psi\|_{X^{s,b}}.
		\]
		
		Apply $\mathcal{F}_D$ in the space variable. For $\mu_D$-a.e.\ $\lambda$ set
		\[
		F_\lambda(t)=(\mathcal{F}_D\widetilde F)(t,\lambda)\in\mathcal{H}_\lambda,
		\qquad
		\Psi_\lambda(t)=(\mathcal{F}_D\Psi)(t,\lambda)
		=\eta_T(t)\int_0^t e^{-i(t-\tau)\lambda}F_\lambda(\tau)\,d\tau.
		\]
		By Lemma~\ref{Lem3.2} with $\mathcal H=\mathcal{H}_\lambda$,
		\[
		\|\Psi_\lambda\|_{H_\lambda^b(\mathbb{R};\mathcal{H}_\lambda)}
		\le C\,T^{\,1-b+b'}\|F_\lambda\|_{H_\lambda^{b'-1}(\mathbb{R};\mathcal{H}_\lambda)}.
		\]
		Multiply by $\langle\lambda\rangle^{s}$, square, and integrate in $\lambda$ with respect to $d\mu_D(\lambda)$.
		Using \eqref{eq2.5}, we obtain
		\[
		\|\Psi\|_{X^{s,b}}
		\le C\,T^{\,1-b+b'}\|\widetilde F\|_{X^{s,b'-1}}
		\le C\,T^{\,1-b+b'}\bigl(\|F\|_{X_T^{s,b'-1}}+\varepsilon\bigr).
		\]
		Letting $\varepsilon\to 0$ yields \eqref{eq3.6}.
	\end{proof}

	\begin{lemma}\label{Lem3.4}
		Let
		\[
		\mathbf 1_N=N^{-1/2}(1,\ldots,1)^{\mathsf T}\in\mathbb C^N,
		\]
		and choose $V\in U(N)$ such that $V\mathbf 1_N=e_1$. Define $\mathcal U=V\otimes I_{\mathbb C^2}$ on $L^2(G;\mathbb C^2)$. Then
		\[
		\mathcal U D\mathcal U^{-1}=D_+\oplus D_-^{\oplus(N-1)},
		\]
		where
		\[
		\operatorname{Dom}(D_\kappa)=
		\{z=(\varphi,\chi)^{\mathsf T}\in H^1(\mathbb R_+;\mathbb C^2):
		(I-\kappa\sigma_3)z(0)=0\},
		\qquad \kappa\in\{1,-1\}.
		\]
		Thus $D_+$ corresponds to $\chi(0)=0$, while $D_-$ corresponds to $\varphi(0)=0$.
		For a half-line channel set
		\[
		(\mathcal E_\kappa z)(x)=
		\begin{cases}
			2^{-1/2}z(x),&x\ge0,\\[3pt]
			2^{-1/2}\kappa\sigma_3z(-x),&x<0.
		\end{cases}
		\]
		Then $\mathcal E_\kappa$ is an $L^2$ isometry from $L^2(\mathbb R_+;\mathbb C^2)$ into $L^2(\mathbb R;\mathbb C^2)$ and intertwines $D_\kappa$ with the line Dirac operator $D_{\mathbb R}=-i\sigma_1\partial_x+m\sigma_3$ by functional calculus. Moreover, for $0\le s<1/2$,
		\begin{equation}\label{eq3.7}
			\|u\|_{H_D^s(G)}\simeq
			\left(\sum_{e=1}^N\|u_e\|_{H^s(\mathbb R_+;\mathbb C^2)}^2\right)^{1/2}.
		\end{equation}
		The constants depend only on $s,m$ and on the fixed graph.
	\end{lemma}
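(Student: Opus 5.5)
The plan has three parts: diagonalize $D$ by the unitary $V$, pass from each half-line channel to the line by the reflection $\mathcal E_\kappa$, and then compare Sobolev norms. The main obstacle is the odd extension in the third part.

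\textbf{Diagonalization.} Since $\mathcal U=V\otimes I_{\mathbb C^2}$ acts only on the edge index and $D_e$ acts only on the spinor index with the same coefficients on every edge, $\mathcal U$ commutes with the edgewise differential expression and maps $H^1(G;\mathbb C^2)$ onto itself. It remains to transform the vertex conditions \eqref{eq2.1}.

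Write $\Phi=(\varphi_e(0))_e\in\mathbb C^N$ and $X=(\chi_e(0))_e\in\mathbb C^N$. The conditions \eqref{eq2.1} say exactly that $\Phi\in\operatorname{span}\{\mathbf 1_N\}$ and $X\perp\mathbf 1_N$. Applying $V$ and using $V\mathbf 1_N=e_1$:
\begin{itemize}
\item $V\Phi$ has vanishing components $2,\dots,N$;
\item $VX$ has vanishing first component.
\end{itemize}
Hence channel $1$ carries the condition $\chi(0)=0$, which is $(I-\sigma_3)z(0)=0$, i.e.\ $D_+$. Channels $2,\dots,N$ carry $\varphi(0)=0$, i.e.\ $D_-$. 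This gives $\mathcal U D\mathcal U^{-1}=D_+\oplus D_-^{\oplus(N-1)}$ with equal domains.

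\textbf{Reflection.} Define $R_\kappa w(x)=\kappa\sigma_3w(-x)$ on $L^2(\mathbb R;\mathbb C^2)$. I will check directly that $R_\kappa D_{\mathbb R}=D_{\mathbb R}R_\kappa$ on $H^1(\mathbb R)$. The derivative term flips sign under $x\mapsto -x$, and this is compensated by the anticommutation $\sigma_3\sigma_1=-\sigma_1\sigma_3$. The mass term commutes because $\sigma_3^2=I$.

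Consequently the fixed space $\{R_\kappa w=w\}$ reduces $D_{\mathbb R}$. The map $\mathcal E_\kappa$ is, up to the normalization, a unitary from $L^2(\mathbb R_+)$ onto this fixed space: indeed $\|\mathcal E_\kappa z\|^2=\tfrac12\|z\|^2+\tfrac12\|z\|^2$. Moreover $\mathcal E_\kappa z\in H^1(\mathbb R)$ if and only if $z\in H^1(\mathbb R_+)$ and $z(0)=\kappa\sigma_3z(0)$, which is precisely the condition defining $\operatorname{Dom}(D_\kappa)$. Hence
\[
\mathcal E_\kappa D_\kappa=D_{\mathbb R}\mathcal E_\kappa .
\]
Since the fixed space reduces $D_{\mathbb R}$, functional calculus then gives $\mathcal E_\kappa f(D_\kappa)=f(D_{\mathbb R})\mathcal E_\kappa$ for every Borel $f$. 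In particular, using $D_{\mathbb R}^2=-\partial_x^2+m^2$ and Plancherel,
\[
\|z\|_{H^s_{D_\kappa}}=\|\langle D_{\mathbb R}\rangle^s\mathcal E_\kappa z\|_{L^2(\mathbb R)}\simeq_{m,s}\|\mathcal E_\kappa z\|_{H^s(\mathbb R)}.
\]

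\textbf{Norm comparison.} It remains to show $\|\mathcal E_\kappa z\|_{H^s(\mathbb R)}\simeq\|z\|_{H^s(\mathbb R_+)}$ for $0\le s<1/2$.
\begin{itemize}
\item The lower bound follows from the definition of $H^s(\mathbb R_+)$ by restriction.
\item For the upper bound, $\mathcal E_\kappa z$ is, componentwise, the even extension of one component and the odd extension of the other. The even extension is bounded on $H^s$ for $s<3/2$.
\item The odd extension equals $E_0z-E_0z(-\cdot)$, where $E_0$ denotes extension by zero. I expect this to be the main obstacle. I plan to invoke the classical fact that $E_0:H^s(\mathbb R_+)\to H^s(\mathbb R)$ is bounded exactly when $s<1/2$, i.e.\ $H^s_0(\mathbb R_+)=H^s(\mathbb R_+)$ (Lions--Magenes). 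If a self-contained argument is needed, it follows from the Gagliardo seminorm together with the fractional Hardy inequality $\int_0^\infty x^{-2s}|z|^2\,dx\lesssim\|z\|_{H^s(\mathbb R_+)}^2$, which holds for $s<1/2$.
\end{itemize}

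Finally, $\|u\|_{H_D^s(G)}^2$ is the sum of the squared $H^s_{D_\pm}$ norms of the components of $\mathcal Uu$, which is comparable to $\sum_e\|(\mathcal Uu)_e\|_{H^s(\mathbb R_+)}^2$. Because $V$ is a fixed unitary matrix acting on the edge index, this is comparable to $\sum_e\|u_e\|_{H^s}^2$ with constants depending only on $N$. This yields \eqref{eq3.7}.
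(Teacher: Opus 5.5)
Your proposal is correct and follows essentially the same route as the paper. You diagonalize the vertex conditions with $V$, realize each half-line channel through $\mathcal E_\kappa$ as the $R_\kappa$-invariant subspace of the line operator, and compare norms below the trace threshold; you also supply the details the paper only sketches, namely the reducing-subspace argument behind the functional calculus and the even/odd extension bounds via zero extension and the fractional Hardy inequality. One small wording point: boundedness of extension by zero for $0\le s<1/2$ is not literally the same statement as $H^s_0(\mathbb R_+)=H^s(\mathbb R_+)$, since the latter also holds at $s=1/2$, where zero extension fails; the fact you actually use is nonetheless correct.
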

	
	\begin{proof}
		The Dirac--Kirchhoff conditions read
		\[
		\varphi_1(0)=\cdots=\varphi_N(0),
		\qquad \sum_{e=1}^N\chi_e(0)=0.
		\]
		After applying $V$ in the edge index, the first condition says that the transformed $\varphi$-components vanish in channels $j=2,\ldots,N$, while the second condition says that the transformed $\chi$-component vanishes in channel $j=1$. This gives
		\[
		\chi_1(0)=0,
		\qquad
		\varphi_j(0)=0,\quad j=2,\ldots,N,
		\]
		and hence the stated direct sum decomposition.
		
		The continuity of $\mathcal E_\kappa z$ at $x=0$ is equivalent to
		$z(0)=\kappa\sigma_3z(0)$, that is, to $(I-\kappa\sigma_3)z(0)=0$.  On $x<0$,
		\[
		D_{\mathbb R}(\kappa\sigma_3 z(-x))
		=\kappa\sigma_3(D_\kappa z)(-x),
		\]
		because $\sigma_1\sigma_3=-\sigma_3\sigma_1$ and the mass term is preserved by $\sigma_3^2=I$. This proves the intertwining on the operator domain, and then for Borel functions of the operators by the spectral theorem. Finally, since $\langle D_{\mathbb R}\rangle^s$ is equivalent to $\langle\partial_x\rangle^s$ on the line and $0\le s<1/2$ is below the trace threshold on the half-line, the reflection characterization gives \eqref{eq3.7}.
	\end{proof}
	
	\begin{lemma}\label{Lem3.5}
		For every $T>0$ there is a constant $C_T$ such that
		\[
		\|e^{-itD}f\|_{L^\infty([0,T]\times G)}
		\le C_T\|f\|_{L^\infty(G)}
		\]
		and
		\[
		\left\|\int_0^t e^{-i(t-\tau)D}F(\tau)\,d\tau\right\|_{L^\infty([0,T]\times G)}
		\le C_T\|F\|_{L^1(0,T;L^\infty(G))}.
		\]
	\end{lemma}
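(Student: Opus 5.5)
The plan is to use the edge-mode reduction of Lemma~\ref{Lem3.4} to reduce the estimate to the free Dirac flow on the line, and then to bound the line kernel in $L^\infty$. Since $\mathcal U=V\otimes I_{\mathbb C^2}$ acts only on the finite edge index, it is bounded on $L^\infty(G;\mathbb C^2)$ with norm at most $N^{1/2}$ (and so is $\mathcal U^{-1}$). Functional calculus together with Lemma~\ref{Lem3.4} gives
\[
e^{-itD}=\mathcal U^{-1}\bigl(e^{-itD_+}\oplus (e^{-itD_-})^{\oplus(N-1)}\bigr)\mathcal U .
\]
The intertwining property gives $\mathcal E_\kappa e^{-itD_\kappa}=e^{-itD_{\mathbb R}}\mathcal E_\kappa$, so $e^{-itD_\kappa}z$ is the restriction to $x>0$ of $\sqrt2\,e^{-itD_{\mathbb R}}\mathcal E_\kappa z$. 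The reflection $\mathcal E_\kappa$ satisfies $\|\mathcal E_\kappa z\|_{L^\infty(\mathbb R)}=2^{-1/2}\|z\|_{L^\infty(\mathbb R_+)}$, because $\sigma_3$ is unitary. It therefore suffices to prove
\[
\sup_{|t|\le T}\|e^{-itD_{\mathbb R}}g\|_{L^\infty(\mathbb R)}\le C_T\|g\|_{L^\infty(\mathbb R)}
\]
for $g\in L^2\cap L^\infty$, and then to extend by density or duality as needed.

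For the line estimate I would use the explicit propagator. When $m=0$ the operator $-i\sigma_1\partial_x$ is diagonalized by the constant eigenvectors of $\sigma_1$. In that basis the flow is pure transport along $x\pm t$, which is an $L^\infty$ isometry. When $m>0$ I would write $e^{-itD_{\mathbb R}}$ through the Klein--Gordon representation
\[
e^{-itD_{\mathbb R}}=\cos(t\sqrt{-\partial_x^2+m^2})-iD_{\mathbb R}\,\frac{\sin(t\sqrt{-\partial_x^2+m^2})}{\sqrt{-\partial_x^2+m^2}}.
\]
The one-dimensional Klein--Gordon kernel is $\tfrac12 J_0\bigl(m\sqrt{t^2-x^2}\bigr)\mathbf 1_{|x|<t}$. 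Applying $\partial_t$ or $\partial_x$ produces two kinds of terms. The first are delta measures at $x=\pm t$, i.e.\ translations. The second are bounded kernels supported in $|x|\le t$, of the form $m J_1(m\sqrt{t^2-x^2})\,t/\sqrt{t^2-x^2}$ and similar. These are bounded because $J_1(r)/r$ is bounded.

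Hence $e^{-itD_{\mathbb R}}$ is a finite combination of translations, with matrix coefficients, plus convolution with an $L^1$ kernel of total mass $O(m|t|)$ uniformly for $|t|\le T$. This yields $C_T\lesssim 1+mT$ up to the factor coming from $\mathcal U$. The main obstacle is this kernel computation with its distributional terms. Alternatively, a Duhamel argument treating $m\sigma_3$ as a bounded perturbation of the massless transport flow avoids Bessel functions: the massless flow $S_0(t)$ is an $L^\infty$ isometry, so iterating
\[
e^{-itD}f=S_0(t)f-i\int_0^tS_0(t-\tau)\,m\sigma_3\,e^{-i\tau D}f\,d\tau
\]
and applying Gronwall gives $C_T=N e^{mT}$-type bounds. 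I would present this route, since it works directly on the graph. The massless Kirchhoff flow on the star is transport with a bounded scattering matrix at the vertex. The plan is to verify its $L^\infty$ boundedness through the same channel decomposition: for $m=0$ the reflected channel flow is exact transport on the line.

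The Duhamel bound then follows from the first estimate by Minkowski's inequality:
\[
\Bigl\|\int_0^te^{-i(t-\tau)D}F(\tau)\,d\tau\Bigr\|_{L^\infty(G)}\le\int_0^t C_T\|F(\tau)\|_{L^\infty(G)}\,d\tau\le C_T\|F\|_{L^1(0,T;L^\infty(G))}.
\]
This uses $0\le t-\tau\le T$, and it is justified for $F$ in a dense class, for instance $F\in L^1(0,T;L^2\cap L^\infty)$, via Bochner measurability.
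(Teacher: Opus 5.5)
Your proposal is correct and is essentially the paper's argument. You reduce through the edge-mode decomposition and reflection of Lemma~\ref{Lem3.4} to the line, treat $m\sigma_3$ as a bounded perturbation of the speed-$\pm1$ transport flow, close with Gronwall, and then obtain the Duhamel bound by integrating the free estimate in $\tau$. The paper runs Gronwall on the line to get $e^{m|t|}$ and transfers back, while you perturb around the massless Kirchhoff flow on the graph; the two are equivalent. One small slip: in the Bessel-kernel aside, which you do not use, the smooth part of the kernel has $L^1$ mass $O(m|t|+m^2t^2)$ rather than $O(m|t|)$, which is harmless for a finite $C_T$.
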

	
	\begin{proof}
		By Lemma~\ref{Lem3.4} it is enough to prove the estimate for the line Dirac flow, up to constants depending on the fixed finite-dimensional edge transformation.  In the eigenbasis of $\sigma_1$, the equation
		\[
		i\partial_t u=(-i\sigma_1\partial_x+m\sigma_3)u
		\]
		is a first-order hyperbolic system whose principal part is transport with speeds $\pm1$ and whose zeroth-order coefficient has norm bounded by $m$.  Gronwall's inequality along characteristics gives
		\[
		\|e^{-itD_{\mathbb R}}f\|_{L^\infty(\mathbb R)}\le e^{m|t|}\|f\|_{L^\infty(\mathbb R)}.
		\]
		The half-line and graph estimates follow from the reflection and the finite-dimensional transformation.  The Duhamel estimate follows by applying the free estimate to $e^{-i(t-\tau)D}F(\tau)$ and integrating in $\tau$.
	\end{proof}
	
	\begin{lemma}\label{Lem3.6}
		Let $p\ge3$, $0\le s<1/2$, and $g(z)=|z|^{p-2}z$ for $z\in\mathbb C^2$. Then
		\begin{equation}\label{eq3.8}
			\|g(u)\|_{H_D^s(G)}
			\le C\|u\|_{L^\infty(G)}^{p-2}\|u\|_{H_D^s(G)}
		\end{equation}
		for every $u\in H_D^s(G)\cap L^\infty(G)$. Moreover, for every $R>0$ there is $C_R>0$ such that, whenever
		\[
		\|u\|_{H_D^s}+\|u\|_{L^\infty}
		+\|v\|_{H_D^s}+\|v\|_{L^\infty}\le R,
		\]
		one has
		\begin{equation}\label{eq3.9}
			\|g(u)-g(v)\|_{H_D^s(G)}
			\le C_R\bigl(\|u-v\|_{H_D^s(G)}+\|u-v\|_{L^\infty(G)}\bigr).
		\end{equation}
	\end{lemma}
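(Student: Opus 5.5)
By the norm equivalence \eqref{eq3.7} of Lemma~\ref{Lem3.4}, valid for $0\le s<1/2$, both \eqref{eq3.8} and \eqref{eq3.9} reduce to edgewise statements in $H^s(\mathbb R_+;\mathbb C^2)$. The nonlinearity acts pointwise, so $g(u)_e=g(u_e)$ on each edge and no vertex compatibility is involved. It therefore suffices to prove, on a single half-line,
\[
\|g(w)\|_{H^s(\mathbb R_+)}\le C\|w\|_{L^\infty}^{p-2}\|w\|_{H^s(\mathbb R_+)},
\]
together with the corresponding Lipschitz bound, and then sum over $e=1,\dots,N$. For $s=0$ both claims are just pointwise inequalities, namely $|g(z)|\le |z|^{p-1}$ and $|g(z)-g(w)|\le C(|z|+|w|)^{p-2}|z-w|$, integrated in $L^2$. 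I will therefore assume $0<s<1/2$.

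For $0<s<1/2$ I will use the Gagliardo--Slobodeckij description of the half-line Sobolev space:
\[
\|w\|_{H^s(\mathbb R_+)}^2\simeq\|w\|_{L^2(\mathbb R_+)}^2+\int_0^\infty\!\!\int_0^\infty\frac{|w(x)-w(y)|^2}{|x-y|^{1+2s}}\,dx\,dy .
\]
This is the intrinsic norm and is equivalent to the restriction norm on $\mathbb R_+$ for $0<s<1$, so no extension issue arises. The key pointwise inequality is the mean value bound for $g$, which is $C^1$ as a map $\mathbb R^4\to\mathbb R^4$ since $p\ge3$:
\[
|g(a)-g(b)|\le (p-1)\max(|a|,|b|)^{p-2}|a-b|.
\]
Inserting this into the double integral, with $a=w(x)$ and $b=w(y)$, bounds the seminorm of $g(w)$ by $(p-1)\|w\|_{L^\infty}^{p-2}$ times the seminorm of $w$. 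The $L^2$ part is handled by $|g(w)|\le\|w\|_\infty^{p-2}|w|$. This gives \eqref{eq3.8}.

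For \eqref{eq3.9}, set $h=u-v$ on an edge. The $L^2$ part again follows from the pointwise Lipschitz bound with constant $CR^{p-2}$. For the seminorm I will write
\[
g(u)-g(v)=\int_0^1 Dg\bigl(v+\theta h\bigr)\,d\theta\;[h],
\]
where $Dg$ is the real derivative. The difference at two points $x,y$ then splits into
\[
M(x)\bigl(h(x)-h(y)\bigr)+\bigl(M(x)-M(y)\bigr)h(y),
\qquad M(x)=\int_0^1 Dg\bigl(v(x)+\theta h(x)\bigr)\,d\theta .
\]
The first term is bounded by $\sup|M|\le CR^{p-2}$ times the seminorm of $h$. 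For the second term I need a Lipschitz bound for $Dg$ on the ball of radius $R$. For $p\ge3$, $Dg$ is Hölder continuous of order $\min(1,p-2)$, and locally Lipschitz once $p\ge 3$ (since $|z|^{p-2}$ is Lipschitz for $p-2\ge1$). This gives
\[
|M(x)-M(y)|\le C_R\bigl(|u(x)-u(y)|+|v(x)-v(y)|\bigr),
\]
so the second term is bounded by
\[
C_R\,\|h\|_{L^\infty}\,\bigl([u]_{H^s}+[v]_{H^s}\bigr)\le C_R R\,\|h\|_{L^\infty}.
\]
This is exactly where the $\|u-v\|_{L^\infty}$ term on the right of \eqref{eq3.9} comes from.

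The main obstacle is verifying the local Lipschitz property of $Dg$ at $z=0$ for $3\le p$. Away from the origin it is smooth. Near $0$ the entries of $Dg$ have the form $|z|^{p-2}$ times a bounded matrix plus $(p-2)|z|^{p-4}z\otimes z$, which is homogeneous of degree $p-2\ge1$ with a bounded gradient away from $0$. I will check this by a direct homogeneity argument. Combining the edgewise bounds with \eqref{eq3.7} then finishes the proof.
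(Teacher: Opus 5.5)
Your proposal is correct and follows essentially the same route as the paper. You reduce to a single half-line via \eqref{eq3.7}, use pointwise bounds for $s=0$, and combine the Slobodeckij seminorm with the mean-value bound $|g(a)-g(b)|\le C\max(|a|,|b|)^{p-2}|a-b|$ for \eqref{eq3.8}; for \eqref{eq3.9} you write $g(u)-g(v)=\int_0^1 Dg(v+\theta h)h\,d\theta$ and split it as $M(x)(h(x)-h(y))+(M(x)-M(y))h(y)$, using that $Dg$ is Lipschitz on bounded sets. Your homogeneity argument for the Lipschitz property of $Dg$ near $z=0$ supplies a detail that the paper only asserts.
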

	
	\begin{proof}
		By \eqref{eq3.7}, it suffices to work on one half-line.  The case $s=0$ follows from the pointwise bounds
		\[
		|g(a)|\le C|a|^{p-2}|a|,
		\qquad
		|g(a)-g(b)|\le C_R|a-b|,
		\qquad |a|,|b|\le R.
		\]
		Let $0<s<1/2$.  We use the Slobodeckij seminorm on $H^s(\mathbb R_+)$.  Since $p\ge3$, the map $g$ is $C^1$ and $Dg$ is Lipschitz on bounded subsets of $\mathbb C^2$. Thus
		\[
		|g(a)-g(b)|\le C M^{p-2}|a-b|\quad\text{if } |a|,|b|\le M,
		\]
		and
		\[
		|g(a)-g(b)|\le C_R|a-b|,
		\qquad
		\|Dg(a)-Dg(b)\|\le C_R|a-b|,
		\qquad |a|,|b|\le R.
		\]
		Taking $M=\|u\|_{L^\infty}$ in the first inequality gives \eqref{eq3.8}. For the difference estimate, set $h=u-v$ and write
		\[
		g(u)-g(v)=\int_0^1 Dg(v+\theta h)h\,d\theta.
		\]
		For $x,y\in\mathbb R_+$, the preceding Lipschitz bounds give
		\[
		\begin{aligned}
			&|[g(u)-g(v)](x)-[g(u)-g(v)](y)|\\
			&\quad \le C_R|h(x)-h(y)|
			+C_R|h(y)|\bigl(|u(x)-u(y)|+|v(x)-v(y)|\bigr).
		\end{aligned}
		\]
		After dividing by $|x-y|^{1/2+s}$ and integrating in $(x,y)$, the second term is bounded by
		\[
		C_R\|h\|_{L^\infty}\bigl(\|u\|_{H^s(\mathbb R_+)}+\|v\|_{H^s(\mathbb R_+)}\bigr).
		\]
		Together with the $L^2$ bound and the edgewise summation, this proves \eqref{eq3.9}.
	\end{proof}
	
	For $T>0$ and $b>1/2$ we set
	\[
	Y_T^{s,b}=X_T^{s,b}(G)\cap L^\infty([0,T]\times G),
	\qquad
	\|u\|_{Y_T^{s,b}}=\|u\|_{X_T^{s,b}}+
	\|u\|_{L^\infty([0,T]\times G)}.
	\]
	
	\begin{lemma}\label{Lem3.7}
		Let $p\ge3$, $0\le s<1/2$, $b>1/2$ and $0<b'<1/2$. Then, for every $T\in(0,1]$ and every $u\in Y_T^{s,b}$,
		\[
		\||u|^{p-2}u\|_{X_T^{s,b'-1}}
		\le C T^{1/2}\|u\|_{L^\infty([0,T]\times G)}^{p-2}\|u\|_{X_T^{s,b}}.
		\]
		Moreover, for every $R>0$ there is $C_R>0$ such that, if $u,v\in Y_T^{s,b}$ and
		$\|u\|_{Y_T^{s,b}}+\|v\|_{Y_T^{s,b}}\le R$, then
		\[
		\||u|^{p-2}u-|v|^{p-2}v\|_{X_T^{s,b'-1}}
		\le C_R T^{1/2}\|u-v\|_{Y_T^{s,b}}.
		\]
	\end{lemma}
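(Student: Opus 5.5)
The plan is to pass from $X_T^{s,b'-1}$ to the $L^2_tH^s_D$ level and then apply the fixed-time Nemytskii estimate of Lemma~\ref{Lem3.6} pointwise in time. The first step is the elementary embedding
\[
\|F\|_{X_T^{s,b'-1}}\le \|F\|_{X_T^{s,0}}\le \|\mathbf 1_{[0,T]}F\|_{L^2(\mathbb R;H_D^s(G))},
\]
valid since $b'-1<0$, so $\langle\tau+\lambda\rangle^{2(b'-1)}\le 1$. At $b=0$ the norm \eqref{eq2.5} is, by Plancherel in $t$ together with \eqref{eq2.4}, exactly the $L^2_tH^s_D$ norm. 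Taking the extension $\mathbf 1_{[0,T]}F$, which is admissible in Definition~\ref{Def2.5} after approximation by elements of $\mathcal U$, gives the second inequality.

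With $F=|u|^{p-2}u$, Lemma~\ref{Lem3.6}, \eqref{eq3.8}, applied for a.e.\ $t\in[0,T]$ gives
\[
\|F(t)\|_{H_D^s}\le C\|u(t)\|_{L^\infty(G)}^{p-2}\|u(t)\|_{H_D^s}\le C\|u\|_{L^\infty([0,T]\times G)}^{p-2}\sup_{t\in[0,T]}\|u(t)\|_{H_D^s}.
\]
Integrating the square over $[0,T]$ produces the factor $T^{1/2}$. The supremum is then bounded by $C_b\|u\|_{X_T^{s,b}}$ using Lemma~\ref{Lem2.7}, which is where $b>1/2$ enters. This proves the first estimate.

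For the difference estimate the argument is identical, using \eqref{eq3.9} in place of \eqref{eq3.8}. For each $t$, the fixed-time quantities satisfy
\[
\|u(t)\|_{H^s_D}+\|u(t)\|_{L^\infty}+\|v(t)\|_{H^s_D}+\|v(t)\|_{L^\infty}\le C_bR+R,
\]
again by Lemma~\ref{Lem2.7}, so \eqref{eq3.9} applies with radius $R'=(C_b+1)R$. This gives
\[
\|g(u(t))-g(v(t))\|_{H_D^s}\le C_{R'}\bigl(C_b\|u-v\|_{X_T^{s,b}}+\|u-v\|_{L^\infty([0,T]\times G)}\bigr)\le C_R\|u-v\|_{Y_T^{s,b}},
\]
and the $L^2(0,T)$ integration once more yields $T^{1/2}$.

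The main obstacle is the justification of the first step at the level of the completed spaces. One must check that $\mathbf 1_{[0,T]}F$, for $F\in L^2(0,T;H^s_D)$, lies in $X^{s,b'-1}$ (indeed in $X^{s,0}$) with norm equal to its $L^2_tH^s_D$ norm. I would handle this by density: approximate $F$ in $L^2(\mathbb R;H^s_D)$ by elements of $\mathcal U$, obtained by spectral truncation $\mathbf 1_{|D|\le M}$ and time mollification, and note that the $X^{s,0}$ norm coincides with the $L^2_tH^s_D$ norm on $\mathcal U$. Two measurability points also need checking: that $t\mapsto g(u(t))$ is strongly measurable into $H^s_D$, which follows from the continuity of $u$ into $H^s_D$ and the local Lipschitz bound \eqref{eq3.9}; and that the $L^\infty$ bound holds for a.e.\ $t$.
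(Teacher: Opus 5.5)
Your proposal is correct and follows the paper's proof essentially line for line: you bound the $X_T^{s,b'-1}$ norm by the $L^2(0,T;H_D^s(G))$ norm via zero extension (using $b'-1<0$), apply Lemma~\ref{Lem3.6} at each fixed time, convert $\sup_t\|u(t)\|_{H_D^s}$ into $\|u\|_{X_T^{s,b}}$ by Lemma~\ref{Lem2.7}, and treat the difference estimate the same way through \eqref{eq3.9}. One minor correction to your measurability remark: \eqref{eq3.9} alone does not make $t\mapsto g(u(t))$ continuous into $H_D^s(G)$, because $u$ need not be continuous into $L^\infty(G)$; argue instead from continuity into $L^2$ (which follows from the pointwise local Lipschitz bound on $g$), the uniform $H_D^s$ bound, and Pettis' theorem.
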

	
	\begin{proof}
		Let $g(z)=|z|^{p-2}z$. Since $b'-1<0$, the zero extension from $[0,T]$ to $\mathbb R$ gives
		\[
		\|F\|_{X_T^{s,b'-1}}
		\le \|F\|_{L^2(0,T;H_D^s(G))}.
		\]
		Using Lemma~\ref{Lem3.6} at each time and the embedding
		$X_T^{s,b}\hookrightarrow C([0,T];H_D^s(G))$, we obtain
		\[
		\|g(u)\|_{X_T^{s,b'-1}}
		\le \|g(u)\|_{L^2(0,T;H_D^s)}
		\le C T^{1/2}\|u\|_{L^\infty([0,T]\times G)}^{p-2}
		\|u\|_{L^\infty(0,T;H_D^s)}
		\le C T^{1/2}\|u\|_{L^\infty}^{p-2}\|u\|_{X_T^{s,b}}.
		\]
		The difference estimate follows in the same way from \eqref{eq3.9}.
	\end{proof}

	\section{Proof of Local Well-Posedness}\label{Sec4}
	
	\begin{lemma}\label{Lem4.1}
		Let $p\ge3$, $0\le s<\frac12$, $b>1/2$ and $0<b'<1/2$.
		Assume that $\psi\in Y_T^{s,b}$ satisfies, for every $t\in[0,T]$,
		\[
		\psi(t)=e^{-itD}\psi_0+i\int_0^t e^{-i(t-\tau)D}\,|\psi(\tau)|^{p-2}\psi(\tau)\,d\tau
		\quad\text{in }H_D^s(G).
		\]
		Then $\psi$ is a distributional solution of \eqref{eq1.1} on $(0,T)$ with values in $H_D^{s-1}(G)$, namely
		\[
		i\partial_t\psi=D\psi-|\psi|^{p-2}\psi
		\quad\text{in }\mathcal D'\bigl((0,T);H_D^{s-1}(G)\bigr).
		\]
	\end{lemma}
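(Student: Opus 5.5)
We first set $F(\tau)=|\psi(\tau)|^{p-2}\psi(\tau)$ and record what we know about it. Since $\psi\in Y_T^{s,b}$, Lemma~\ref{Lem2.7} gives $\psi\in C([0,T];H_D^s(G))$, and $\psi$ is also bounded in $L^\infty([0,T]\times G)$. Lemma~\ref{Lem3.6} applied at each time then yields
\[
F\in L^\infty(0,T;H_D^s(G))\subset L^1(0,T;H_D^s(G)).
\]
It is in fact continuous in time, by \eqref{eq3.9}, although integrability is all we use. The plan is to verify the equation in the weaker space $H_D^{s-1}(G)$, where $D$ acts boundedly from $H_D^s$. By functional calculus, $D:H_D^{s}(G)\to H_D^{s-1}(G)$ is bounded, since $|\lambda|\langle\lambda\rangle^{s-1}\le\langle\lambda\rangle^s$. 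Moreover $\{e^{-itD}\}$ is a unitary group on every $H_D^r(G)$ that commutes with $D$.

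Next we handle the free part. For $\psi_0\in H_D^s$, the map $t\mapsto e^{-itD}\psi_0$ is $C^1$ into $H_D^{s-1}$, with derivative $-iDe^{-itD}\psi_0$. To check this, work on the spectral side. The difference quotient $\bigl(e^{-ih\lambda}-1\bigr)/h+i\lambda$ is bounded by $C|\lambda|$. Dominated convergence in the norm \eqref{eq2.4} with weight $\langle\lambda\rangle^{2(s-1)}$ then gives convergence, since $|\lambda|^2\langle\lambda\rangle^{2s-2}\le\langle\lambda\rangle^{2s}$. This shows $i\partial_t e^{-itD}\psi_0=De^{-itD}\psi_0$.

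For the Duhamel term, we write
\[
I(t)=\int_0^t e^{-i(t-\tau)D}F(\tau)\,d\tau=e^{-itD}\int_0^t e^{i\tau D}F(\tau)\,d\tau=e^{-itD}J(t).
\]
Here $J\in W^{1,1}(0,T;H_D^s)$ with $J'(t)=e^{itD}F(t)$ for a.e.\ $t$, by the fundamental theorem of calculus for Bochner integrals. The product rule for $e^{-itD}J(t)$ in $H_D^{s-1}$ follows by combining the $C^1$ property of the group, in the strong sense with uniform bounds, with the absolute continuity of $J$. The cleanest way to do this is to pair with a test function: take $\phi\in C_c^\infty(0,T)$ and compute $-\int I(t)\phi'(t)\,dt$ directly. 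Fubini then gives
\[
-\int_0^T I\phi'\,dt=\int_0^T\bigl(-iDI(t)+F(t)\bigr)\phi(t)\,dt,
\]
using $\partial_t e^{-i(t-\tau)D}g=-iDe^{-i(t-\tau)D}g$ in $H_D^{s-1}$ for fixed $g\in H_D^s$, together with the boundary term at $t=\tau$ that produces $F(\tau)$. Hence $\partial_t I=F-iDI$ in $\mathcal D'((0,T);H_D^{s-1})$.

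Finally we combine the two pieces:
\[
i\partial_t\psi=De^{-itD}\psi_0+i\bigl(i(F-iDI)\bigr)=D\bigl(e^{-itD}\psi_0+iI\bigr)-F=D\psi-F.
\]
The main technical point is justifying the interchange of $t$-differentiation, the $\tau$-integral and $D$ in the Duhamel term. We expect the test-function/Fubini route to handle this, since all integrands are Bochner integrable in $H_D^{s-1}$ with norms bounded by $C\|F\|_{L^1(0,T;H_D^s)}$.
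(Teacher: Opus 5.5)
Your argument is correct, but it takes a different and more economical route than the paper. The paper never uses $F=|\psi|^{p-2}\psi\in L^1(0,T;H_D^s(G))$ directly. Instead it proceeds as follows:
\begin{itemize}
\item it places $F$ in $X_T^{s,b'-1}$ via Lemma~\ref{Lem3.7} and takes an extension $\widetilde F$;
\item it writes the cut-off Duhamel term fiberwise on the spectral side and approximates each fiber $F_\lambda$ by smooth $F_{\lambda,n}$ in $H_\lambda^{b'-1}$;
\item it derives a weak identity from the scalar ODE for $\Psi_{\lambda,n}$ and passes to the limit using the continuity $H_\lambda^{b'-1}\to H_\lambda^{b}$ of Lemma~\ref{Lem3.2} and the duality $(H_\lambda^{b'-1})^*=H_\lambda^{1-b'}$;
\item it integrates in $\lambda$ against $\phi\in\mathrm{Dom}(D)$ and finally extends to $\phi\in H_D^{1-s}(G)$ by density.
\end{itemize}
You instead use the fact that the $L^\infty$ control already gives $F\in L^\infty(0,T;H_D^s)$. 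Then the classical semigroup computation suffices: the group is $C^1$ from $H_D^s$ into $H_D^{s-1}$, followed by Fubini and one integration by parts against a scalar test function. This removes the Bourgain-space machinery, the fiberwise approximation, the distributional pairing of $F_\lambda\in H_\lambda^{b'-1}$ with test functions, and the final density step. It also avoids the conjugation and sign bookkeeping on the spectral side. As printed, the paper's fiberwise ODE should read $i\partial_t\Psi_{\lambda,n}=\lambda\Psi_{\lambda,n}+iF_{\lambda,n}$, since $\partial_t\Psi_{\lambda,n}=F_{\lambda,n}-i\lambda\Psi_{\lambda,n}$. What the paper's route buys is robustness: it only needs $F\in X_T^{s,b'-1}$. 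It would therefore still work for a nonlinearity controlled only through an $X^{s,b}$-type multilinear estimate, where $F$ need not lie in $L^1_tH_D^s$.

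One side remark should be corrected. Continuity of $F$ into $H_D^s$ does not follow from \eqref{eq3.9}, because that estimate requires $\|\psi(t)-\psi(t')\|_{L^\infty(G)}\to0$, and you only know $\psi\in L^\infty([0,T]\times G)$. What your Bochner integrals actually need, and what you do not address, is strong measurability of $F$ in $H_D^s$. This does hold. Every $\psi(t)$ lies in the $L^\infty$-ball of radius $R=\|\psi\|_{L^\infty([0,T]\times G)}$, because that ball is closed in $L^2$ and $\psi\in C([0,T];L^2)$. The pointwise Lipschitz bound for $g$ on this ball then gives $F\in C([0,T];L^2)$. Combined with boundedness of $F$ in $H_D^s$, this yields weak continuity into $H_D^s$, hence strong measurability by Pettis's theorem. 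Alternatively, run your computation with only $F\in C([0,T];L^2)$ in $H_D^{-1}$. Every term is in fact $H_D^{s-1}$-valued, since $I=-i(\psi-e^{-itD}\psi_0)\in C([0,T];H_D^s)$ by hypothesis, so the identity holds in $\mathcal D'((0,T);H_D^{s-1})$.
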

	
	\begin{proof}
		Since $b>\frac12$, Lemma~\ref{Lem2.7} yields $\psi\in C([0,T];H_D^s(G))$.
		By Lemma~\ref{Lem3.7},
		\[
		F(t)=|\psi(t)|^{p-2}\psi(t)\in X_T^{s,b'-1}.
		\]
		
		Let $\varepsilon>0$.
		Choose an extension $\widetilde F\in X^{s,b'-1}$ of $F$ from $[0,T]$ such that
		\[
		\widetilde F(t)=F(t)\ \text{for }t\in[0,T],
		\qquad
		\|\widetilde F\|_{X^{s,b'-1}}\le \|F\|_{X_T^{s,b'-1}}+\varepsilon.
		\]
		Define
		\[
		\Psi(t)=\eta_T(t)\int_0^t e^{-i(t-\tau)D}\widetilde F(\tau)\,d\tau,
		\qquad
		\eta_T(t)=\eta(t/T),
		\]
		with $\eta$ as in \eqref{eq3.1}.
		Since $\eta_T(t)=1$ for $t\in[0,T]$, we have
		\[
		\Psi(t)=\int_0^t e^{-i(t-\tau)D}F(\tau)\,d\tau
		\qquad\text{for all }t\in[0,T].
		\]
		Arguing as in the proof of Lemma~\ref{Lem3.3}, we obtain
		\[
		\Psi\in X^{s,b},
		\qquad
		\|\Psi\|_{X^{s,b}}\le C\,T^{\,1-b+b'}\|\widetilde F\|_{X^{s,b'-1}}.
		\]
		In particular, restricting to $[0,T]$ and using Lemma~\ref{Lem2.7}, we have $\Psi\in C([0,T];H_D^s(G))$.
		The mild identity can be rewritten as
		\[
		\psi(t)=e^{-itD}\psi_0+i\Psi(t)\qquad\text{in }H_D^s(G)\ \text{for }t\in[0,T].
		\]
		
		Let $\zeta\in C_c^\infty(0,T)$ and $\phi\in\mathrm{Dom}(D)$.
		Since $t\mapsto e^{-itD}\psi_0$ satisfies $i\partial_t(e^{-itD}\psi_0)=D(e^{-itD}\psi_0)$ in $H_D^{s-1}(G)$,
		integration by parts on $(0,T)$ gives
		\begin{equation}\label{eq4.1}
			\int_0^T \bigl\langle e^{-itD}\psi_0,\,-i\zeta'(t)\phi-\zeta(t)D\phi\bigr\rangle_{L^2(G)}\,dt=0.
		\end{equation}
		
		It remains to prove the corresponding identity for $\Psi$ and $F$.
		Apply $\mathcal F_D$ in the space variable.
		For $\mu_D$-a.e.\ $\lambda$, set
		\[
		F_\lambda(t)=(\mathcal F_D\widetilde F)(t,\lambda)\in\mathcal H_\lambda,
		\qquad
		\Psi_\lambda(t)=(\mathcal F_D\Psi)(t,\lambda)\in\mathcal H_\lambda,
		\qquad
		\phi_\lambda=(\mathcal F_D\phi)(\lambda)\in\mathcal H_\lambda.
		\]
		Then
		\[
		\Psi_\lambda(t)=\eta_T(t)\int_0^t e^{-i(t-\tau)\lambda}F_\lambda(\tau)\,d\tau.
		\]
		
		Fix such a $\lambda$.
		Choose a sequence $F_{\lambda,n}\in C_c^\infty(\mathbb{R};\mathcal H_\lambda)$ such that
		$F_{\lambda,n}\to F_\lambda$ in $H_\lambda^{b'-1}(\mathbb{R};\mathcal H_\lambda)$, and let
		\[
		\Psi_{\lambda,n}(t)=\eta_T(t)\int_0^t e^{-i(t-\tau)\lambda}F_{\lambda,n}(\tau)\,d\tau.
		\]
		Since $\eta_T\equiv 1$ on $[0,T]$, for $t\in(0,T)$ we have
		\[
		\Psi_{\lambda,n}(t)=\int_0^t e^{-i(t-\tau)\lambda}F_{\lambda,n}(\tau)\,d\tau,
		\]
		hence $\Psi_{\lambda,n}\in C^1((0,T);\mathcal H_\lambda)$ and satisfies on $(0,T)$
		\[
		i\partial_t\Psi_{\lambda,n}=\lambda\Psi_{\lambda,n}+F_{\lambda,n}.
		\]
		Multiplying by $\zeta(t)$, pairing with $\phi_\lambda$, and integrating by parts on $(0,T)$ gives
		\begin{equation}\label{eq4.2}
			\int_0^T \bigl\langle \Psi_{\lambda,n}(t),\,-i\zeta'(t)\phi_\lambda-\zeta(t)\lambda\phi_\lambda\bigr\rangle_{\mathcal H_\lambda}\,dt
			+\int_0^T \langle F_{\lambda,n}(t),\,\zeta(t)\phi_\lambda\rangle_{\mathcal H_\lambda}\,dt=0.
		\end{equation}
		
		By Lemma~\ref{Lem3.2}, the map
		$f\mapsto \eta_T(t)\int_0^t e^{-i(t-\tau)\lambda}f(\tau)\,d\tau$
		is continuous from $H_\lambda^{b'-1}$ to $H_\lambda^{b}$, hence $\Psi_{\lambda,n}\to \Psi_\lambda$ in
		$H_\lambda^b(\mathbb{R};\mathcal H_\lambda)$.
		Since $b>1/2$, the convergence $\Psi_{\lambda,n}\to\Psi_\lambda$ in $H_\lambda^b$ implies convergence in $L^2(0,T;\mathcal H_\lambda)$, and hence the first term in \eqref{eq4.2} passes to the limit.
		For the second term, $F_{\lambda,n}\to F_\lambda$ in $H_\lambda^{b'-1}$, while $t\mapsto \zeta(t)\phi_\lambda$ belongs to $H_\lambda^{1-b'}(\mathbb R;\mathcal H_\lambda)$.  Thus the pairing converges by the duality
		$(H_\lambda^{b'-1})^*=H_\lambda^{1-b'}$.  Letting $n\to\infty$ yields
		\begin{equation}\label{eq4.3}
			\int_0^T \bigl\langle \Psi_{\lambda}(t),\,-i\zeta'(t)\phi_\lambda-\zeta(t)\lambda\phi_\lambda\bigr\rangle_{\mathcal H_\lambda}\,dt
			+\int_0^T \langle F_{\lambda}(t),\,\zeta(t)\phi_\lambda\rangle_{\mathcal H_\lambda}\,dt=0.
		\end{equation}
		
		Integrating \eqref{eq4.3} in $\lambda$ with respect to $\mu_D$ and using Plancherel for $\mathcal F_D$
		together with $(\mathcal F_D(D\phi))(\lambda)=\lambda\phi_\lambda$, we obtain
		\begin{equation}\label{eq4.4}
			\int_0^T \bigl\langle \Psi(t),\,-i\zeta'(t)\phi-\zeta(t)D\phi\bigr\rangle_{L^2(G)}\,dt
			+\int_0^T \langle \widetilde F(t),\,\zeta(t)\phi\rangle_{L^2(G)}\,dt=0.
		\end{equation}
		Since $\zeta$ is supported in $(0,T)$ and $\widetilde F=F$ on $[0,T]$, the last term equals
		$\int_0^T\langle F(t),\zeta(t)\phi\rangle_{L^2(G)}\,dt$.
		
		Adding \eqref{eq4.1} and \eqref{eq4.4} and using $\psi=e^{-itD}\psi_0+i\Psi$ on $[0,T]$ yields
		\[
		\int_0^T \bigl\langle \psi(t),-i\zeta'(t)\phi-\zeta(t)D\phi\bigr\rangle_{L^2(G)}\,dt
		+\int_0^T \langle |\psi(t)|^{p-2}\psi(t),\zeta(t)\phi\rangle_{L^2(G)}\,dt=0.
		\]
		
		Finally, since $\mathrm{Dom}(D)$ is dense in $H_D^{1-s}(G)$,
		the above identity extends by density to all $\phi\in H_D^{1-s}(G)$, which is the dual space of $H_D^{s-1}(G)$
		with respect to the $L^2$ pairing. Hence
		\[
		i\partial_t\psi=D\psi-|\psi|^{p-2}\psi
		\quad\text{in }\mathcal D'\bigl((0,T);H_D^{s-1}(G)\bigr).
		\]
	\end{proof}

	\begin{proof}[Proof of Theorem~\ref{Thm1.1}]
		Let $p\ge3$ and $0\le s<1/2$. Choose, for instance, fixed parameters
		\[
		b\in(1/2,1),\qquad b'\in(0,1/2).
		\]
		For $T\in(0,1]$ define, for $\psi\in Y_T^{s,b}$,
		\[
		\mathcal T(\psi)(t)=e^{-itD}\psi_0+i\int_0^t e^{-i(t-\tau)D}\,|\psi(\tau)|^{p-2}\psi(\tau)\,d\tau.
		\]
		Set
		\[
		A=\|\psi_0\|_{H_D^s(G)}+\|\psi_0\|_{L^\infty(G)},
		\qquad R=2C_0A,
		\]
		where $C_0$ is chosen larger than the constants in the free $X^{s,b}$ estimate, the free $L^\infty$ estimate on $[0,1]$, and the embedding $X_T^{s,b}\hookrightarrow C([0,T];H_D^s)$.
		Let
		\[
		B_R=\{\psi\in Y_T^{s,b}:\ \|\psi\|_{Y_T^{s,b}}\le R\}.
		\]
		The space $Y_T^{s,b}$ is Banach, hence $B_R$ is complete.
		
		For $\psi\in B_R$, Lemmas~\ref{Lem3.1}, \ref{Lem3.3} and \ref{Lem3.7} give
		\[
		\begin{aligned}
			\|\mathcal T(\psi)\|_{X_T^{s,b}}
			&\le C\|\psi_0\|_{H_D^s}
			+C T^{1-b+b'}\||\psi|^{p-2}\psi\|_{X_T^{s,b'-1}}\\
			&\le C\|\psi_0\|_{H_D^s}+C_R T^{1-b+b'+1/2}R.
		\end{aligned}
		\]
		Similarly, using Lemma~\ref{Lem3.5},
		\[
		\begin{aligned}
			\|\mathcal T(\psi)\|_{L^\infty([0,T]\times G)}
			&\le C\|\psi_0\|_{L^\infty}+C\||\psi|^{p-2}\psi\|_{L^1(0,T;L^\infty)}\\
			&\le C\|\psi_0\|_{L^\infty}+C T R^{p-1}.
		\end{aligned}
		\]
		Choosing $T>0$ sufficiently small, depending only on $A$ and the fixed graph, we get
		$\mathcal T(B_R)\subset B_R$.
		
		For $\psi_1,\psi_2\in B_R$, Lemmas~\ref{Lem3.3} and \ref{Lem3.7} yield
		\[
		\|\mathcal T(\psi_1)-\mathcal T(\psi_2)\|_{X_T^{s,b}}
		\le C_R T^{1-b+b'+1/2}\|\psi_1-\psi_2\|_{Y_T^{s,b}}.
		\]
		The pointwise Lipschitz bound for $g(z)=|z|^{p-2}z$ on bounded sets and Lemma~\ref{Lem3.5} give
		\[
		\|\mathcal T(\psi_1)-\mathcal T(\psi_2)\|_{L^\infty([0,T]\times G)}
		\le C_R T\|\psi_1-\psi_2\|_{Y_T^{s,b}}.
		\]
		After decreasing $T$ if necessary, $\mathcal T$ is a contraction on $B_R$. Therefore it has a unique fixed point
		\[
		\psi\in Y_T^{s,b}=X_T^{s,b}\cap L^\infty([0,T]\times G).
		\]
		By Lemma~\ref{Lem2.7}, $\psi\in C([0,T];H_D^s(G))$.
		The fixed point identity gives the Duhamel formula in $H_D^s(G)$ for every $t\in[0,T]$, and Lemma~\ref{Lem4.1} implies that $\psi$ is a distributional solution of \eqref{eq1.1} on $(0,T)$ with values in $H_D^{s-1}(G)$.
		
		The same estimates, with $R$ chosen from a common bound for two initial data, give uniqueness and local Lipschitz dependence in the $Y_T^{s,b}$ norm. In particular,
		\[
		\sup_{t\in[0,T]}\|\psi(t)-\widetilde\psi(t)\|_{H_D^s(G)}
		+\|\psi-\widetilde\psi\|_{L^\infty([0,T]\times G)}
		\le C_R\bigl(\|\psi_0-\widetilde\psi_0\|_{H_D^s(G)}+\|\psi_0-\widetilde\psi_0\|_{L^\infty(G)}\bigr).
		\]
		This proves the local well-posedness part of the theorem.
	\end{proof}

	\section{Mass Conservation and Blow-up Alternative}\label{Sec5}
	In this section we derive two dynamical consequences of Theorem~\ref{Thm1.1}: conservation of the $L^2$ norm and a blow-up alternative. The Hamiltonian energy considered in \cite{Borrelli2021} belongs to the operator-domain theory; the conservation law established below is the charge conservation identity.

	\begin{lemma}\label{Lem5.1}
		Let $\psi$ be the solution given by Theorem~\ref{Thm1.1} with initial data
		$\psi_0\in H_D^s(G)\cap L^\infty(G)$. Then
		\[
		\|\psi(t)\|_{L^2(G;\mathbb{C}^2)}=\|\psi_0\|_{L^2(G;\mathbb{C}^2)}
		\quad\text{for all }t\in[0,T].
		\]
	\end{lemma}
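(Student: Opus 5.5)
We would prove charge conservation by regularization and a limiting argument, since at regularity $0\le s<\tfrac12$ the solution is not regular enough to differentiate $\|\psi(t)\|_{L^2}^2$ directly. The formal computation is simple. Pairing $i\partial_t\psi=D\psi-g(\psi)$, where $g(z)=|z|^{p-2}z$, with $\psi$ gives
\[
\frac{d}{dt}\|\psi\|_{L^2}^2=2\,\mathrm{Im}\,\langle D\psi,\psi\rangle-2\,\mathrm{Im}\,\langle g(\psi),\psi\rangle .
\]
The first term vanishes because $D$ is self-adjoint (Proposition~\ref{Prop2.2}). The second term vanishes because $\langle g(\psi),\psi\rangle_{\mathbb{C}^2}=|\psi|^p$ is real pointwise.

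To make this rigorous, we work with the Duhamel formula rather than the differential equation. Set $v(t)=e^{itD}\psi(t)$. By unitarity, $\|v(t)\|_{L^2}=\|\psi(t)\|_{L^2}$, and the fixed point identity gives
\[
v(t)=\psi_0+i\int_0^t e^{i\tau D}F(\tau)\,d\tau,\qquad F=g(\psi).
\]
By Lemma~\ref{Lem3.6} and Lemma~\ref{Lem2.7}, $F\in L^\infty(0,T;H_D^s)\subset L^\infty(0,T;L^2(G))$, since $\psi$ is bounded in $H_D^s$ and in $L^\infty$. Consequently $v\in W^{1,\infty}(0,T;L^2)$ with $\partial_t v=ie^{itD}F(t)$ in the Bochner sense. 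This is the key point: for this equation the nonlinearity already lies in $L^2$, so no spectral truncation is needed.

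The map $t\mapsto\|v(t)\|_{L^2}^2$ is then absolutely continuous, with derivative
\[
2\,\mathrm{Re}\,\langle v,\partial_t v\rangle=2\,\mathrm{Re}\,\bigl\langle e^{itD}\psi(t),\,ie^{itD}F(t)\bigr\rangle=2\,\mathrm{Re}\bigl(i\langle\psi(t),F(t)\rangle\bigr)=-2\,\mathrm{Im}\int_G|\psi|^p\,dx=0
\]
for a.e.\ $t$. Integrating in time gives $\|\psi(t)\|_{L^2}=\|\psi_0\|_{L^2}$ for every $t\in[0,T]$, using the continuity of $\psi$ in $L^2$.

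The main step to check carefully is that the Bochner integral $\int_0^t e^{i\tau D}F(\tau)\,d\tau$ has the strong derivative $e^{itD}F(t)$ almost everywhere. This requires $F$ to be strongly measurable into $L^2$, which follows from $\psi\in C([0,T];L^2)$ together with the Lipschitz bound \eqref{eq3.9} for $s=0$; that bound makes $t\mapsto F(t)$ continuous into $L^2$. The Lebesgue differentiation theorem for Bochner integrals then applies. In fact continuity upgrades $v$ to $C^1([0,T];L^2)$, so the derivative identity holds for every $t$.
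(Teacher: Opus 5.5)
Your proof is correct, but it takes a different route from the paper.

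\textbf{The paper's route.} The paper regularizes in the spectral variable. It sets $P_M=E_D([-M,M])$ and differentiates $\|P_M\psi(t)\|_{L^2}^2$ along the truncated equation $i\partial_t\psi_M=D\psi_M-P_MF$, in which $DP_M$ is bounded. It then uses the reality of $\langle F,\psi\rangle_{L^2}=\int_G|\psi|^p\,dx$ to bound the derivative by $2\|F(t)\|_{L^2}\|(I-P_M)\psi(t)\|_{L^2}$. Finally it lets $M\to\infty$, removing this error because $P_M\to I$ uniformly on the compact set $\psi([0,T])\subset L^2$.

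\textbf{Your route.} You pass to the interaction picture $v(t)=e^{itD}\psi(t)$, which removes the unbounded operator from the time derivative altogether. Since $F=|\psi|^{p-2}\psi$ already lies in $C([0,T];L^2)$, $v$ is $C^1$ into $L^2$. Unitarity of $e^{itD}$ then gives $\frac{d}{dt}\|v\|_{L^2}^2=2\,\mathrm{Re}\bigl(i\int_G|\psi|^p\,dx\bigr)=0$ exactly, with no truncation error and no compactness argument.

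\textbf{Comparison.} Your argument is shorter. It also shows that once $F\in L^1(0,T;L^2)$, which here is supplied by the $L^\infty$ bound via $\|F(t)\|_{L^2}\le R^{p-2}\|\psi(t)\|_{L^2}$, the spectral truncation is not needed. The paper's truncation is the more generic template: it differentiates along the equation itself, so it carries over to quantities other than the $L^2$ norm, where conjugating by the free flow does not remove the difficulty.

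\textbf{One correction.} The continuity of $t\mapsto F(t)$ in $L^2$ does not follow from \eqref{eq3.9} as stated. Its right-hand side contains $\|\psi(t)-\psi(t')\|_{L^\infty(G)}$, which need not tend to zero as $t'\to t$. Use instead the pointwise bound $|g(a)-g(b)|\le C_R|a-b|$ for $|a|,|b|\le R$ (the $s=0$ case in the proof of Lemma~\ref{Lem3.6}), with $R=\|\psi\|_{L^\infty([0,T]\times G)}$. This gives $\|F(t)-F(t')\|_{L^2}\le C_R\|\psi(t)-\psi(t')\|_{L^2}$.

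The bound $\|\psi(t)\|_{L^\infty(G)}\le R$ holds for every $t$, not only almost every $t$. This follows from the $L^2$-continuity of $\psi$ together with a.e.\ convergence along a subsequence. Alternatively, mere strong measurability of $F$ would suffice: it gives $v\in W^{1,1}(0,T;L^2)$, and the same computation holds for a.e.\ $t$.
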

	
	\begin{proof}
		Since $s\ge0$ and $b>1/2$, Lemma~\ref{Lem2.7} gives
		$\psi\in C([0,T];L^2(G;\mathbb C^2))$.  Moreover, Theorem~\ref{Thm1.1} gives
		$\psi\in L^\infty([0,T]\times G)$.  Therefore
		\[
		F:=|\psi|^{p-2}\psi\in L^\infty(0,T;L^2(G;\mathbb C^2))
		\subset L^1([0,T];L^2(G;\mathbb C^2)).
		\]
		
		For $M\ge1$ set $P_M=E_D([-M,M])$ and $\psi_M=P_M\psi$.  Then $P_M$ is an orthogonal projection on $L^2(G;\mathbb C^2)$, it commutes with $D$, and $D$ is bounded on $\mathrm{Ran}(P_M)$ with
		$\|DP_M\|_{L^2\to L^2}\le M$.  Applying $P_M$ to the Duhamel formula gives, for every $t\in[0,T]$,
		\[
		\psi_M(t)=e^{-itD}P_M\psi_0+i\int_0^t e^{-i(t-\tau)D}P_MF(\tau)\,d\tau
		\quad\text{in }L^2(G;\mathbb C^2).
		\]
		Since $DP_M$ is bounded and $P_MF\in L^1([0,T];L^2)$, the map $t\mapsto\psi_M(t)$ is absolutely continuous in $L^2$ and satisfies, for a.e. $t\in(0,T)$,
		\[
		i\partial_t\psi_M=D\psi_M-P_MF
		\quad\text{in }L^2(G;\mathbb C^2).
		\]
		For such $t$,
		\[
		\frac{d}{dt}\|\psi_M(t)\|_{L^2}^2
		=2\mathrm{Re}\,\langle -iD\psi_M(t)+iP_MF(t),\psi_M(t)\rangle_{L^2}.
		\]
		The contribution of $D$ vanishes because $D$ is self-adjoint and $\psi_M(t)\in\mathrm{Ran}(P_M)\subset\mathrm{Dom}(D)$.  Since $P_M$ is self-adjoint and $P_M\psi_M=\psi_M$,
		\[
		\langle P_MF,\psi_M\rangle_{L^2}=\langle F,\psi_M\rangle_{L^2}.
		\]
		Using $\psi_M=\psi-(I-P_M)\psi$ and
		\[
		\langle F(t),\psi(t)\rangle_{L^2}=
		\int_G |\psi(t,x)|^p\,dx\in\mathbb R,
		\]
		we obtain, for a.e. $t\in(0,T)$,
		\[
		\left|\frac{d}{dt}\|\psi_M(t)\|_{L^2}^2\right|
		\le 2\|F(t)\|_{L^2}\,\|(I-P_M)\psi(t)\|_{L^2}.
		\]
		Integrating in time gives
		\[
		\left|\|\psi_M(t)\|_{L^2}^2-\|P_M\psi_0\|_{L^2}^2\right|
		\le 2\int_0^t \|F(\tau)\|_{L^2}\,\|(I-P_M)\psi(\tau)\|_{L^2}\,d\tau.
		\]
		The set $\psi([0,T])$ is compact in $L^2(G;\mathbb C^2)$.  Since $P_M\to I$ strongly on $L^2$ and
		$\sup_M\|I-P_M\|_{L^2\to L^2}\le1$, the convergence is uniform on this compact set:
		\[
		\sup_{\tau\in[0,T]}\|(I-P_M)\psi(\tau)\|_{L^2}\to0.
		\]
		Together with $F\in L^1([0,T];L^2)$, this implies that the right-hand side tends to zero as $M\to\infty$.
		Also $\psi_M(t)=P_M\psi(t)\to\psi(t)$ and $P_M\psi_0\to\psi_0$ in $L^2$.  Passing to the limit yields
		\[
		\|\psi(t)\|_{L^2(G;\mathbb C^2)}^2=\|\psi_0\|_{L^2(G;\mathbb C^2)}^2,
		\qquad t\in[0,T].
		\]
		This proves the conservation law.
	\end{proof}
	
	\begin{lemma}\label{Lem5.2}
		Let $[0,T^\ast)$ be the maximal forward lifespan of the solution constructed in Theorem~\ref{Thm1.1}. If $T^\ast<\infty$ and
		\[
		\sup_{t<T^\ast}\|\psi(t)\|_{H_D^s(G)}+\|\psi\|_{L^\infty([0,T^\ast)\times G)}<\infty,
		\]
		then the solution extends beyond $T^\ast$.
	\end{lemma}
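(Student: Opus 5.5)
The approach is the standard continuation argument. The key observation is that the local existence time in Theorem~\ref{Thm1.1} depends only on the size $A=\|\psi_0\|_{H_D^s(G)}+\|\psi_0\|_{L^\infty(G)}$ of the data. We also need that the equation is autonomous, so time translations of solutions are again solutions.

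Set
\[
K=\sup_{t<T^\ast}\|\psi(t)\|_{H_D^s(G)}+\|\psi\|_{L^\infty([0,T^\ast)\times G)}<\infty,
\]
and let $T_K=T(K;G)\in(0,1]$ be the existence time furnished by the contraction argument for data of size at most $K$.

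\emph{Step 1: restarted data are admissible.} For each $t_0\in[0,T^\ast)$ we check that $\psi(t_0)\in H_D^s(G)\cap L^\infty(G)$ with norm at most $K$. The $H_D^s$ part is immediate from continuity (Lemma~\ref{Lem2.7}). For the $L^\infty$ part, the space-time bound only controls $\psi(t)$ for a.e.\ $t$. To get it at every $t_0$, take $t_n\to t_0$ with $\|\psi(t_n)\|_{L^\infty}\le K$. Then $\psi(t_n)\to\psi(t_0)$ in $L^2$, so a subsequence converges a.e.\ on $G$, which gives $\|\psi(t_0)\|_{L^\infty}\le K$.

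\emph{Step 2: restart and glue.} Fix $t_0=T^\ast-T_K/2$ (assume $T^\ast>T_K/2$; otherwise take $t_0=0$ directly). Apply Theorem~\ref{Thm1.1} with data $\psi(t_0)$ to get a solution $\phi\in Y^{s,b}_{T_K}$ with $\phi(0)=\psi(t_0)$. Define $\Psi(t)=\psi(t)$ for $t\le t_0$ and $\Psi(t)=\phi(t-t_0)$ for $t_0\le t\le t_0+T_K$. The new interval ends at $T^\ast+T_K/2$.
\begin{itemize}
\item \emph{Duhamel formula.} Using the group property $e^{-i(t-t_0)D}e^{-it_0D}=e^{-itD}$ and splitting the Duhamel integral at $t_0$, $\Psi$ satisfies the Duhamel formula from time $0$ on $[0,T^\ast+T_K/2)$.
\item \emph{Agreement with $\psi$.} Uniqueness of the fixed point (applied to $\psi(\cdot+t_0)$ on short intervals, both lying in a common ball) shows $\Psi=\psi$ on $[t_0,T^\ast)$.
\end{itemize}

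\emph{Step 3: regularity class of the glued function.} We must verify $\Psi\in X^{s,b}_{T_0}$ for every $T_0<T^\ast+T_K/2$. The $L^\infty$ and $C(H_D^s)$ properties are immediate.

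This step is the main obstacle, since $X^{s,b}$ with $b>1/2$ is not naively stable under gluing by sharp cutoffs. The cleanest route avoids gluing at the level of norms. On any compact $[0,T_0]$, the function $\Psi$ satisfies the Duhamel formula with forcing $F=|\Psi|^{p-2}\Psi\in L^2(0,T_0;H_D^s)$; this uses Lemma~\ref{Lem3.6} together with the $L^\infty$ and $C(H_D^s)$ bounds. Then Lemmas~\ref{Lem3.1} and \ref{Lem3.3} (valid on any finite interval after rescaling the cutoff, with constants depending on $T_0$), combined with the zero-extension bound used in Lemma~\ref{Lem3.7}, directly give $\Psi\in X^{s,b}_{T_0}$.

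This contradicts the maximality of $T^\ast$, so the solution extends to $[0,T^\ast+\delta)$ with $\delta=T_K/2$.
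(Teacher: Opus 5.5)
Your argument is correct and follows the paper's proof. Both use a local existence time depending only on the $H_D^s\cap L^\infty$ size of the data, restart the flow at some $t_0$ within that time of $T^\ast$, and glue the two pieces using uniqueness in $Y^{s,b}$. Your extra steps are added rigor, not a different route. Step~1 gets the $L^\infty$ bound at every time by an a.e.-convergent subsequence, where the paper instead picks a Lebesgue time and allows slack $M+1$. Step~3 checks, via the Duhamel formula on the whole interval, that the glued function lies in $X^{s,b}_{T_0}$, which the paper leaves implicit when it appeals to maximality.
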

	
	\begin{proof}
		Assume that the displayed quantity is finite and denote it by $M$.  The local existence time in the proof of Theorem~\ref{Thm1.1} depends only on the size of the initial datum in $H_D^s\cap L^\infty$ and on the fixed graph. Hence there exists $\delta_0>0$, depending only on $M$ and on the graph, such that the contraction argument gives a solution on a time interval of length $\delta_0$ for every initial datum $\phi$ satisfying
		\[
		\|\phi\|_{H_D^s(G)}+\|\phi\|_{L^\infty(G)}\le M+1.
		\]
		Choose $t_0<T^\ast$ so close to $T^\ast$ that $T^\ast-t_0<\delta_0$.  Since $\psi\in L^\infty([0,T^\ast)\times G)$, we may choose such a $t_0$ which is also a Lebesgue time for the $L^\infty$ representative; then
		\[
		\|\psi(t_0)\|_{H_D^s(G)}+\|\psi(t_0)\|_{L^\infty(G)}\le M+1.
		\]
		Applying the local theory with initial datum $\psi(t_0)$ gives a solution on $[t_0,t_0+\delta_0]$. By uniqueness in the class $Y^{s,b}$, this solution coincides with the original one on the overlap. Since $t_0+\delta_0>T^\ast$, it extends $\psi$ beyond $T^\ast$, contradicting maximality. Therefore the stated boundedness condition cannot hold when $T^\ast<\infty$.
	\end{proof}

	\section*{Acknowledgments}
We would like to thank the anonymous referee for his/her careful readings of our manuscript and the useful comments. 
	
	\medskip
	{\bf Funding:} This work was supported by the National Natural Science Foundation of China
	(12301145, 12561020, 12261107) and Yunnan Fundamental Research Projects
	(202401AU070123, 202601AT070048).
	
	\medskip
	{\bf Author Contributions:} All the authors wrote the main manuscript text together and these authors contributed equally to this work.
	
	\medskip
	\textbf{Data availability.}
	Data sharing is not applicable to this article, since no new data were created or analyzed in this study.
	
	\medskip
	{\bf Conflict of Interests:} The authors declare that there is no conflict of interest.

\end{document}